\newcommand{\xb}{\boldsymbol{x}}
\newcommand{\sms}{\hspace{.7pt}}
\newcommand{\br}[3]{{$#1$}$\lower4pt\hbox{$\tp\atop\raise4pt \hbox{$\scriptscriptstyle{#2}$}$} ${$#3$}}
\newcommand{\tw}[3]{{$#1$}${\,\scriptscriptstyle {#2}}\atop\raise9pt\hbox{$\scriptstyle\tp$} ${$#3$}}
\newcommand{\ttps}[2]{{#1}\raise5pt\hbox{$\lower12pt\hbox{$\scriptstyle\tp$}\atop \lower0pt\hbox{$\tilde\;$}$}\raise4.5pt\hbox{${\scriptstyle{#2}}$}}
\newcommand{\st}[1]{\mbox{${\,\scriptscriptstyle {#1}}\atop\raise5.5pt\hbox{$*$}$}}
\newcommand{\rd}[1]{\mbox{${\,\scriptscriptstyle {#1}}\atop\raise5.5pt\hbox{$\bullet$}$}}
\newcommand{\rt}[1]{\otimes_\chi}
\newcommand{\lt}[1]{\mbox{${\,\scriptscriptstyle {#1}}\atop\raise5.5pt\hbox{$\ltimes$}$}}
\newcommand{\btr}{\raise1.2pt\hbox{$\scriptstyle\blacktriangleright$}\hspace{2pt}}
\newcommand{\btl}{\raise1.2pt\hbox{$\scriptstyle\blacktriangleleft$}\hspace{2pt}}
\newcommand{\lcr}{\raise1.0pt \hbox{${\scriptstyle\rightharpoonup}$}}
\newcommand{\rcr}{\raise1.0pt \hbox{${\scriptstyle\leftharpoonup}$}}
\newcommand{\ttp}{{\lower12pt\hbox{$\tp$}\atop \hbox{$\tilde\;$}}}
\renewcommand{\deg}{\mathrm{deg}}
\newcommand{\gr}{\mathrm{gr}}
\newcommand{\id}{\mathrm{id}}
\newcommand{\Ru}{\mathcal{R}}
\newcommand{\Q}{\mathcal{Q}}
\renewcommand{\O}{\mathcal{O}}
\newcommand{\C}{\mathbb{C}}
\newcommand{\Z}{\mathbb{Z}}
\newcommand{\N}{\mathbb{N}}
\newcommand{\tp}{\otimes}
\newcommand{\U}{U}
\newcommand{\ve}{\varepsilon}
\newcommand{\gm}{\gamma}
\newcommand{\dt}{\delta}
\newcommand{\op}{\oplus}
\newcommand{\la}{\lambda}
\newcommand{\tr}{\triangleright}
\newcommand{\End}{\mathrm{End}}
\newcommand{\Span}{\mathrm{Span}}
\newcommand{\Hom}{\mathrm{Hom}}
\newcommand{\Tr}{\mathrm{Tr}}
\newcommand{\diag}{\mathrm{diag}}
\newcommand{\g}{\mathfrak{g}}
\renewcommand{\b}{\mathfrak{b}}
\newcommand{\h}{\mathfrak{h}}
\newcommand{\nb}{\boldsymbol{n}}
\newcommand{\s}{\mathfrak{s}}
\newcommand{\n}{\mathfrak{n}}
\newcommand{\nn}{\nonumber}
\newcommand{\p}{\mathfrak{p}}
\renewcommand{\l}{\mathfrak{l}}
\renewcommand{\c}{\mathfrak{c}}
\newcommand{\si}{\sigma}
\newcommand{\al}{\alpha}
\newcommand{\bt}{\beta}
\newcommand{\be}{\begin{eqnarray}}
\newcommand{\ee}{\end{eqnarray}}
\newtheorem{thm}{Theorem}[section]
\newtheorem{propn}[thm]{Proposition}
\newtheorem{lemma}[thm]{Lemma}
\newtheorem{corollary}[thm]{Corollary}
\newcommand{\parag}{\advance\prg by1 {\noindent\bf\thesection.\the\prg\hspace{6pt}}}
\begin{document}
\title{On representations of quantum conjugacy classes of $GL(n)$}
\author{
Thomas Ashton and Andrey Mudrov\footnote{This study is supported in part by the RFBR grant 09-01-00504.} \vspace{20pt}\\
\small Department of Mathematics,\\ \small University of Leicester, \\
\small University Road,
LE1 7RH Leicester, UK\\
}

\date{}
\maketitle

\begin{abstract}
Let $O$ be a closed Poisson conjugacy class of the complex algebraic Poisson group $GL(n)$ relative to the Drinfeld-Jimbo factorizable classical r-matrix. Denote by  $T$ the maximal torus of diagonal matrices
 in $GL(n)$.  With every $a\in O\cap T$ we associate a highest weight module $M_a$ over
 the quantum group $U_q\bigl(\g\l(n)\bigr)$ and an equivariant quantization  $\C_{\hbar,a}[O]$ of the polynomial ring $\C[O]$ realized by operators on $M_a$. All quantizations  $\C_{\hbar,a}[O]$ are isomorphic and can be regarded  as different exact representations of the same algebra, $\C_{\hbar}[O]$.
 Similar results are obtained for semisimple adjoint orbits in $\g\l(n)$ equipped with the canonical $GL(n)$-invariant Poisson structure.
\end{abstract}

{\small \underline{Mathematics Subject Classifications}: 81R50, 81R60, 17B37.
}

{\small \underline{Key words}: Poisson-Lie manifolds, quantization, highest weight modules.
}
\newpage
\section{Introduction}
Let $G$  denote the complex general linear algebraic group $GL(n)$ and let $\g$ be its Lie algebra $\g\l(n)$.
Regard $G$ as a Poisson group relative to the standard classical r-matrix and let $U_\hbar(\g)$ be the corresponding
quantum group. Consider a semisimple conjugacy class $O\subset G$, which is an affine subvariety of $G$.
An equivariant quantization of $O$
is a $\C[\![\hbar]\!]$-free deformation of the polynomial ring $\C[O]$ along with the $U(\g)$-action
to an associative algebra $\C_\hbar[O]$ and an action of $U_\hbar(\g)$. The indeterminate $\hbar$ is the deformation parameter  and $\C[\![\hbar]\!]$ is the ring of formal power series in $\hbar$ with complex coefficients. The algebra $\C_\hbar[O]$ was constructed in \cite{DM} and incorporated in
the general scheme in \cite{M1}.
In this paper, we present a family of exact representations of
$\C_\hbar[O]$ on $U_\hbar(\g)$-modules of highest weight. This family is parameterized
by diagonal matrices from $O$. Equivalently, with every diagonal matrix we associate
a highest weight module and an equivariant quantization of the conjugacy class of this matrix,
through an operator realization on that module. The quantized affine ring depends on $O$ and not on a
particular point in it. However, the modules are not isomorphic thus yielding non-equivalent exact representations of the same quantum conjugacy class.

Although the isotropy subgroups of all points in $O$ are isomorphic, not all are strictly compatible with the
standard triangular polarization of $\g$. We call
such a stabilizer a Levi subgroup if simple roots of its Lie algebra $\l$ are simple roots of $\g$, i.e. $\Pi^+_\l\subset \Pi^+_\g$. By this definition, $\l$ being a Levi subalgebra depends on a polarization of $\g$ relative
to a Cartan subalgebra, which is fixed once and for all. The quantization theory of the corresponding conjugacy class is standard: it can be realized by operators on a parabolic Verma module $M_\la$
relative to $U_q(\l)\subset U_q(\g)$.
General diagonal matrices in $O$ are uniquely parameterized by  Weyl group elements $\si$ satisfying
  $\si(R^+_\l)\subset R^+_\g$, where $R^+$ is the set of positive roots. For such $\si$ we construct a highest weight module $M_{\si.\la}$ and realize the algebra $\C_\hbar[O]$ in $\End(M_{\si.\la})$. Of course,
$M_{\si.\la}$ is a parabolic Verma module if  $\si (\Pi^+_\l)\subset \Pi^+_\g$.

A conjugacy class is simultaneously an adjoint orbit in $\End(\C^n)=\g$, and all the orbits in $\g$
are isomorphic to conjugacy classes. They are also
equipped with the canonical $U(\g)$-invariant Kirillov bracket restricted from $\g$.
The theory of quantum orbits in $\g$ and their representations is parallel to the theory of quantum conjugacy classes. It can be worked out by a straightforward rephrasing
of the key steps of this exposition. Another way to quantize semisimple orbits in $\g=\g\l(n)$ is through a two parameter
quantization, which can be obtained from quantum conjugacy classes by a formal shift trick and includes the quantization of Kirillov bracket as a limit case, \cite{DM}. In order to reduce the volume, we
do not focus on this case giving only the resulting formulas at the end of the paper.

An interesting feature of the non-parabolic quantization via $M_{\si.\la}$ is a lack of natural candidate for the quantum isotropy subgroup. This is true even in the case of Kirillov bracket on $O$. In this respect, this quantization
may help to understand the properties of quantum conjugacy classes which are
essentially non-Levi, that is, their isotropy subgroups are not isomorphic to Levi subgroups, \cite{M2,M3,M4}. Such classes
are not present in $GL(n)$ but form a large family in symplectic and orthogonal groups.

The paper is organized as follows. After the next preparatory section we look at representations
of the quantum upper and lower triangular unipotent groups. This is used for description of singular
vectors in the Verma modules and their tensor product with the natural representation of $U_q(\g)$.
We find the eigenvalues of a "quantum coordinate" matrix acting on the $U_q(\g)$-module
$\C^n\tp M_{\si.\la}$ and check that they are independent of $\si$. This enables us to construct
the representation of $\C_\hbar[O]$ in $\End(M_{\si.\la})$.

\section{Preliminaries}
It is an elementary fact from linear algebra that two  semisimple matrices are related by a conjugation
if and only if they have the same eigenvalues. So a conjugacy class $O$ is determined by the spectrum of
a matrix $A\in O$. This spectrum can be described by the  complex-valued vector  $\xb=(x_1, \ldots, x_k)$
of pairwise distinct eigenvalues and the integer-valued vector of multiplicities $\nb=(n_1,\ldots,n_k)$.
 All $x_i$ are invertible, while $n_i$ sum up to the dimension $n=n_1+\ldots+ n_k$.
The integer $k$ is assumed to be from the interval $[2,n]$, as the case $k=1$ is trivial.
The correspondence $(\xb,\nb)\mapsto O$ goes through the choice of
the initial point $o\in O$:
$$
o=\diag(\underbrace{x_1,\ldots, x_1}_{n_1},\ldots, \underbrace{x_k,\ldots, x_k}_{n_k}).
$$
The centralizer of $o$ in $G$ is the group $L=GL(n_1)\times\ldots \times GL(n_k)$, so $O$ is isomorphic
to $G/L$ as a $G$-space.
Note that the parametrization $(\xb,\nb)\mapsto O$ is not one-to-one, as a simultaneous permutation of $x_i$ and $n_i$
gives the same conjugacy class albeit a different initial point.

Restriction of $O$ to the maximal torus  $T$ of diagonal matrices is an orbit of the Weyl group, which
is the symmetric group $S_n$ in the case of study. It acts on diagonal matrices by permutation of entries,
$(\si o)_{ii}=o_{jj}$, $j=\si^{-1}(i)$, where $\si\in S_n$.
The isotropy subgroup of $o$ in $S_n$ is $S_{\nb} =S_{n_1}\times\ldots \times S_{n_k}$,
thus $O\cap T$ is in bijection with $S_n/S_{\nb}$.

The affine ring $\C[O]$ is the quotient
of the ring $\C[\End(\C^n)]$ by the ideal of relations
$$
(A-x_1)\ldots(A-x_k)=0,\quad \Tr(A^m)-\sum_{i=1}^k n_i x_i^m=0, \quad m=1,\ldots, k,
$$
where  $A=\sum_{i,j=1}^ne_{ij}\tp A_{ji}$ is the matrix of coordinate functions $A_{ji}$. Here $e_{ij}\in \End(\C^n)$ are the standard matrix units, $e_{ij}e_{lm}=\dt_{jl}e_{im}$. The left equality determines the vector $\xb$ while the values of $\Tr(A^m)$ fix the
vector $\nb$, up to a simultaneous permutation of their components.

The quantum conjugacy class $\C_\hbar[O]$ is described as follows.
Let $S\in \End(\C^n)\tp\End(\C^n)$ be the Hecke braid matrix associated with $U_q(\g)$, whose explicit form can be extracted from \cite{Ji}.
The quantized polynomial ring  $\C_\hbar[\End(\C^n)]$ is
generated over $\C[\![\hbar]\!]$ by the matrix entries $(K_{ij})_{i,j=1}^n$ subject to the
relations
\be
S_{12} K_2 S_{12}K_2= K_2 S_{12}K_2S_{12}
\label{RE}
\ee
written in the standard form of "reflection equation" in $\End(\C^n)\tp\End(\C^n)\tp\C_\hbar[\End(\C^n)]$.
The algebra $\C_\hbar[O]$
is a quotient of $\C_\hbar[\End(\C^n)]$ by the ideal of relations
$$
\prod_{i=1}^k(K-x_i)=0, \quad
\Tr_q(K^m)=\sum_{i=1}^kx^m_i[n_i]_q\prod_{j=1\atop j\not =i}^k\frac{q^{n_j}x_i-x_jq^{-n_j}}{x_i-x_j},\quad m=1,\ldots,k,
$$
with the q-trace of a matrix $X$  defined as $\Tr_q(X)=\sum_{i=1}^n q^{n+1-2i}X_{ii}$.
Here and further on,   $[z]_q=\frac{q^z-q^{-z}}{q-q^{-1}}$ for any $z\in \C$.

Let $R$ be the root system of $\g\l(n)$, $R^+$ the subset of positive roots and $\Pi^+=\{\al_i\}_{i=1}^{n-1}\subset R^+$ a basis of simple roots. Further we deal with the root systems of reductive  subalgebras $\l\subset \g$.
We label them with the subscript $\l$ reserving by default the notation $R=R_\g$ {\em etc}.
We use the standard relization of $R$ in a complex Euclidean vector space $\C^n$ with the inner product $(.,.)$, where
the simple positive roots are expressed in  an orthogonal basis $\{\ve_i\}_{i=1}^n$ by
$\al_i=\ve_i-\ve_{i+1}$, $i=1,\ldots, n-1$. This embedding identifies $\C^n$ with the dual vector space to the
Cartan subalgebra $\h\subset \g$.
We denote by  $h_\mu$ the image of $\mu$  under the isomorphism $\h^*\to \h$ implemented by the inner product: $\nu(h_\mu)=(\nu,\mu)$.

The quantum group $\U_\hbar(\g)$ is a $\C[\![\hbar]\!]$-algebra generated by $\h$ and $e_\al,f_\al$, $\al\in \Pi^+$,  subject to the relations, \cite{D},
$$
[h,e_\bt]=\bt(h)e_\bt,\quad
[h,f_\bt]=-\bt(h)f_\bt,\quad
[e_\al,f_\bt]=\dt_{\al,\bt}\frac{q^{h_\al}-q^{-h_\al}}{q-q^{-1}},\quad h\in \h, \quad\al,\bt \in \h^*,
$$
$$
e^2_\al e_\bt-[2]_qe_\al e_\bt e_\al+e_\bt e^2_\al=0
=
f^2_\al f_\bt-[2]_qf_\al f_\bt f_\al+f_\bt f^2_\al
$$
for $(\al,\bt)=-1$ and $[e_\al, e_\bt]=0=[f_\al, f_\bt]$ for $(\al,\bt)=0$.

The Hopf algebra structure on $\U_\hbar(\g)$ is defined through the comultiplication
$$
\Delta(e_\al)=e_\al\tp 1+q^{h_\al}\tp e_\al,\quad
\Delta(h_\al)=h_\al\tp 1+1\tp h_\al,\quad
\Delta(f_\al)=1\tp f_\al+f_\al\tp q^{-h_\al},
$$
counit $\epsilon(h_\al)=\epsilon(e_\al)=\epsilon(f_\al)=0$, and the antipode
$\gm(h_\al)=-h_\al$, $\gm(e_\al)=-q^{-h_\al}e_\al$, $\gm(f_\al)=-f_\al q^{h_\al}$. We use the Sweedler notation
$\Delta(x)=x^{(1)}\tp x^{(2)}$ for $x\in U_\hbar(\g)$.

The natural representation $\pi\colon U_\hbar(\g)\to \End(\C^n)$ is determined by the assignment
$\pi(h_{\al_i})=e_{ii}-e_{i+1\>i+1}$, $\pi(e_{\al_i})=e_{i\>i+1}$, $\pi(f_{\al_i})=e_{i+1\> i}$, for  $i\in [1,n)$.

We also work with  the quantum group $U_q(\g)$ as a $\C$-algebra assuming that $q$ is not a root of unit. It
is generated by  $\{q^{\pm h_{\ve_i}}\}_{i=1}^n$ and $\{f_\al, e_\al\}_{\al \in \Pi^+}$.
One can also consider $U_q(\g)$ over the ring $\C[q,q^{-1}]$ and its localizations. Further extension  over $\C[\![\hbar]\!]$
via $q=e^\hbar$ determines an embedding $U_q(\g)\varsubsetneq U_\hbar(\g)$, for which we use the same notation.
Until Proposition \ref{dynamical_roots_deformed}, $U_q(\g)$ is understood as a $\C$-algebra.

The quantum matrix space $\C_\hbar[\End(\C^n)]$ introduced in (\ref{RE}) is a $U_\hbar(\g)$-module algebra. The action is defined
on the generators by
$
(\id \tp x)(K)=\bigl(\pi\bigl(\gm(x^{(1)})\bigr)\tp \id \bigr)(K)\bigl(\pi(x^{(2)})\tp \id \bigr)$,
$x\in U_\hbar(\g)$, and extended to $\C_\hbar[\End(\C^n)]\ni a,b$ by the "quantum Leibnitz rule" $x(ab)=(x^{(1)}a)(x^{(2)}b)$.
There exists a homomorphism  $\C_\hbar[\End(\C^n)]\to U_\hbar(\g)$ implemented via the assignment
$K_{ij}\mapsto \Q_{ij}$, where $\Q$ is expressed through the universal R-matrix of $U_\hbar(\g)$
by $\Q=(\pi\tp \id)(\Ru_{21}\Ru)$. The image of this homomorphism is a quantization, $\C_\hbar[G]$,
of the coordinate ring of the group $G$. The algebra $\C_\hbar[O]$ is a quotient of $\C_\hbar[G]$.

\section{Natural representation of $U_q(\n_\pm)$.}
Consider the polarization $\g=\n_-\oplus \h\oplus \n_+$, where $\n_\pm$ are the nilpotent Lie subalgebras of positive and negative root subspaces. Let $\b_\pm=\h\oplus\n_\pm$ be the Borel subalgebras in $\g$.
Denote by $U_q(\n_\pm)$  the subalgebras in $U_q(\g)$ generated by $\{e_\al\}_{\al\in \Pi^+}$ and, respectively, $\{f_\al\}_{\al\in \Pi^+}$.
The quantum Borel subgroups $U_q(\b_\pm)$ are generated by $U_q(\n_\pm)$ over $U_q(\h)$; they are Hopf subalgebras in  $U_q(\g)$. The algebras $U_q(\n_\pm)$ and $U_q(\b_\pm)$ are deformations of the corresponding classical universal enveloping algebras. We consider a grading in $U_q(\b_\pm)$ with $\deg f_\al=1$, $\deg q^{\pm h_\al}=0$, $\al \in \Pi^+$.

Further we collect a few facts about the natural representation of $U_q(\g)$ on $\C^n$ and its restriction to
$U_q(\n_\pm)$.
Let $\{w_i\}_{i=1}^n\subset \C^n$ be the standard basis of columns with the only non-zero entry $1$ at the $i$-position from the top. The vector $w_i$ carries the weight $\ve_i$.
The natural representation of $U_q(\g)$ is determined by its restriction to the subalgebras
$U_q(\n_\pm)$, which is encoded in the diagrams
\begin{center}
\begin{picture}(200,40)
\put(0,0){$w_{n}$}
\put(10,15){\circle{3}}
\put(15,15){\vector(1,0){30}}
\put(50,15){\circle{3}}
\put(100,15){$\ldots$}
\put(55,15){\vector(1,0){30}}
\put(45,0){$w_{n-1}$}
\put(160,15){\circle{3}}
\put(125,15){\vector(1,0){30}}
\put(150,0){$w_{2}$}
\put(165,15){\vector(1,0){30}}
\put(200,15){\circle{3}}
\put(190,0){$w_{1}$}
\put(20,20){$e_{\al_{n-1}}$}
\put(60,20){$e_{\al_{n-2}}$}
\put(135,20){$e_{\al_{2}}$}
\put(175,20){$e_{\al_{1}}$}

\end{picture}
\qquad
\begin{picture}(200,40)
\put(0,0){$w_{n}$}
\put(10,15){\circle{3}}
\put(45,15){\vector(-1,0){30}}
\put(50,15){\circle{3}}
\put(100,15){$\ldots$}
\put(85,15){\vector(-1,0){30}}
\put(45,0){$w_{n-1}$}
\put(160,15){\circle{3}}
\put(155,15){\vector(-1,0){30}}
\put(150,0){$w_{2}$}
\put(195,15){\vector(-1,0){30}}
\put(200,15){\circle{3}}
\put(190,0){$w_{1}$}
\put(25,20){$f_{\al_{n-1}}$}
\put(65,20){$f_{\al_{n-2}}$}
\put(135,20){$f_{\al_{2}}$}
\put(175,20){$f_{\al_{1}}$}

\end{picture}
\end{center}
It follows from the diagrams that for every pair of integers $i,j\in [1,n]$ such that $i<j$ there is a unique Chevalley monomial $\psi_{ji}=f_{\al_{j-1}}\ldots f_{\al_{i}}\in U_q(\n_-)$ relating $w_i$ to $w_j$, that is $w_j=\psi_{ji} w_i$. Moreover, $\psi_{ji} w_l=\dt_{il}w_j$.

The algebras $U_q(\n_\pm)$ are isomorphic via the Chevalley involution $f_\al\to e_\al$.
We call contragredient the representation of $U_q(\n_\pm)$ on $\C^n$ given by $e_{k}w_{i}=\dt_{k,i}w_{i+1}$
and $f_{k}w_{i}=-\dt_{k,n-i+1}w_{i-1}$.
It factors through the automorphisms $e_i\mapsto e_{n-i}$, $f_i\mapsto f_{n-i}$, (inversion of Dynkin diagram) and the natural representation of $U_q(\b_\pm)$. Alternatively, it is a composition of the Chevalley involution $f_\al\leftrightarrow e_\al$ and natural representation.

For any finite dimensional $U_q(\g)$-module $W$ define the (right) dual representation on $W^*$
as $\langle w, x \tr u\rangle=\langle \gm^{-1}(x)\tr w,u\rangle$, where $w\in W$, $u\in W^*$, and $x\in U_q(\g)$.
Although $U_q(\n_\pm)$ are not Hopf algebras, their dual representations are still defined through
the embedding $U_q(\n_\pm)\subset U_q(\g)$.
Consider another copy of vector space $\C^n$ as dual to initial $\C^n$,
with the basis $\{v_i\}$, and the right conatural representation of $U_q(\n_+)$ on it.
Since $\gm^{-1}(e_\al)=-e_\al q^{-h_\al}$, one has
\be
e_\al w_i=\sum_{j=1}^n \pi(e_\al)_{ij}w_j, \quad e_\al v_i =-q^{-(\al, \ve_i)}\sum_{j=1}^n  \pi(e_\al)_{ji}v_j,
\label{quasi_dual}
\ee
where $\pi(e_\al)$, $\al\in \Pi^+$, are the matrices of the natural $\n_+$-action
on $\C^n$.

Consider the left ideal $J\subset U_q(\n_-)$ generated by $f_{\al_1}^2,f_{\al_i}$, $i>1$. Let
$N$ be the quotient module $U_q(\n_-)/J$. Remark that the automorphism of $U_q(\n_-)$
defined by $f_{\al_i}\mapsto a_if_{\al_i}$ for invertible $a_i\in \C$ leaves $J$ invariant and gives rise to
an automorphism of $N$.
\begin{propn}
$N$ is isomorphic to the natural $U_q(\n_-)$-module $\C^n$.
\end{propn}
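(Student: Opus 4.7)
The approach is to exhibit mutually inverse $U_q(\n_-)$-module homomorphisms between $N$ and $\C^n$. First, the natural module $\C^n$ is cyclically generated over $U_q(\n_-)$ by $w_1$, since $w_j = \psi_{j,1}\, w_1$ with $\psi_{j,1}=f_{\al_{j-1}}\cdots f_{\al_1}$ (and $\psi_{1,1}=1$). Inspection of the Chevalley diagram shows that the generators of $J$ annihilate $w_1$: $f_{\al_i} w_1 = 0$ for $i > 1$, and $f_{\al_1}^2 w_1 = f_{\al_1} w_2 = 0$. Hence the map $u \mapsto u\, w_1$ factors through $N$ and yields a surjective $U_q(\n_-)$-module homomorphism $\pi : N \twoheadrightarrow \C^n$ with $\pi(\bar 1) = w_1$.

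For the inverse, I would introduce the linear map $\phi : \C^n \to N$ defined on the basis by $\phi(w_j) = \psi_{j,1} + J$. Since $\pi \circ \phi = \id_{\C^n}$, once $\phi$ is known to be a $U_q(\n_-)$-module map it is automatically injective; its image contains the cyclic generator $\bar 1 = \phi(w_1)$ and is therefore all of $N$, making $\phi$ an isomorphism with inverse $\pi$. The task thus reduces to verifying
\begin{equation*}
f_{\al_i}\, \psi_{j,1} \equiv \dt_{ij}\, \psi_{j+1,1} \pmod{J}, \qquad i \in [1, n-1],\ j \in [1, n].
\end{equation*}
The case $i = j$ is immediate, so the content is the key claim $f_{\al_i}\, \psi_{j,1} \in J$ for $i \neq j$.

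I would prove this claim by induction on $j$. The base $j = 1$ (so $\psi_{1,1} = 1$) is the definition of $J$. For the inductive step I would distinguish three regimes. If $i > j$, the commutation $[f_{\al_i}, f_{\al_k}] = 0$ for $|i-k| \geq 2$ allows me to push $f_{\al_i}$ past every factor of $\psi_{j,1}$, producing a left multiple of $f_{\al_i} \in J$ (valid since $i \geq 2$). If $i \leq j - 3$, the same commutation puts $f_{\al_i}$ past $f_{\al_{j-1}}$, and the inductive hypothesis applied to $f_{\al_i}\, \psi_{j-1,1}$ finishes the step. If $i \in \{j-2, j-1\}$, I would invoke the $q$-Serre relation on $f_{\al_{j-2}} f_{\al_{j-1}} f_{\al_{j-2}}$ (respectively on $f_{\al_{j-1}}^2 f_{\al_{j-2}}$) to rewrite the expression as a combination of two terms; commuting $f_{\al_{j-1}}$ through the tail $\psi_{j-2,1}$ (permissible as all of its indices are $\leq j-3$), each term ends in either $f_{\al_{j-1}}$ or $f_{\al_{j-1}}^2$, both of which lie in $J$ (using that $J$ is a left ideal to handle the square). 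Division by $[2]_q \neq 0$ then concludes.

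The main obstacle is the Serre cases $i \in \{j-2, j-1\}$: one has to carry out the commutations carefully to certify that every term in the Serre expansion really lands in $U_q(\n_-) \cdot J$. The essential point is that $J$ is a \emph{left} ideal, so it suffices to expose a generator of $J$ as the rightmost factor of a product; pushing powers of $f_{\al_{j-1}}$ to the right end of each Serre-expansion term via the simple-root commutations is precisely what delivers this.
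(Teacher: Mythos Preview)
Your approach is exactly the paper's: build the surjection $N\to\C^n$, $1\mapsto w_1$, and verify that the assignment $w_j\mapsto \psi_{j,1}+J$ is a $U_q(\n_-)$-module section. The paper states this in one line without details; you are filling them in, and the induction scheme is the right one.

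There is, however, a small inaccuracy in the case $i=j-2$. After applying the Serre relation
\[
[2]_q\,f_{\al_{j-2}}f_{\al_{j-1}}f_{\al_{j-2}}=f_{\al_{j-2}}^2f_{\al_{j-1}}+f_{\al_{j-1}}f_{\al_{j-2}}^2
\]
and multiplying by the tail $\psi_{j-2,1}$, the first term indeed becomes $f_{\al_{j-2}}^2\,\psi_{j-2,1}\,f_{\al_{j-1}}$, ending in $f_{\al_{j-1}}\in J$. But in the second term $f_{\al_{j-1}}f_{\al_{j-2}}^2\,\psi_{j-2,1}$ the factor $f_{\al_{j-1}}$ sits to the \emph{left} of $f_{\al_{j-2}}^2$ and cannot be commuted past it, so this term does not end in $f_{\al_{j-1}}$ or $f_{\al_{j-1}}^2$ as you claim. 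The fix is immediate and uses the very induction you set up: rewrite
\[
f_{\al_{j-1}}f_{\al_{j-2}}^2\,\psi_{j-2,1}
= f_{\al_{j-1}}\cdot\bigl(f_{\al_{j-2}}\,\psi_{j-1,1}\bigr),
\]
and observe that $f_{\al_{j-2}}\,\psi_{j-1,1}\in J$ by the inductive hypothesis at level $j-1$ (this is the subcase $i'=j'-1$ there). Hence the second term lies in $f_{\al_{j-1}}J\subset J$. With this correction your argument goes through and matches the paper's proof, only more explicitly.
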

\begin{proof}
This is a standard fact about finite-dimensional irreducible quotients of Verma  $U_q(\g)$-modules, \cite{Ja}.
The special case of $\C^n$ can be checked directly by constructing the obvious epimorphism $N\to \C^n$, $1\mapsto w_1$, and its section $w_1\mapsto 1+J$, $w_i\mapsto f_{\al_{i-1}}\ldots f_{\al_{1}}+J$, $i>1$.
\end{proof}

\begin{corollary}
The (right or left) conatural and contragredient representations of the algebras $U_q(\n_\pm)$ on $\C^n$ are isomorphic.
\end{corollary}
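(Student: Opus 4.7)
The plan is to realize both representations as the same quotient of $U_q(\n_+)$ by a common left ideal, and then invoke uniqueness of the cyclic module. The Chevalley involution $\tau\colon U_q(\g)\to U_q(\g)$, $e_\al\leftrightarrow f_\al$, restricts to an algebra isomorphism $U_q(\n_-)\cong U_q(\n_+)$ under which the ideal $J=(f_{\al_1}^2,\;f_{\al_i}\colon i>1)$ of the preceding Proposition is sent to the ideal $J^+=(e_{\al_1}^2,\;e_{\al_i}\colon i>1)\subset U_q(\n_+)$. Hence the Proposition transports to an isomorphism
\[
U_q(\n_+)/J^+\;\cong\;N^\tau
\]
of $U_q(\n_+)$-modules, where $N^\tau$ denotes the natural $U_q(\n_-)$-module pulled back along $\tau$. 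By the alternative description of the contragredient given just before the Proposition (Chevalley composed with the natural representation), $N^\tau$ is precisely the contragredient of $U_q(\n_+)$ on $\C^n$, with the class of $1+J^+$ corresponding to $w_1$.

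To treat the conatural, I take $v_1$ (of weight $-\ve_1$) as the candidate cyclic generator. A direct inspection of (\ref{quasi_dual}) shows that $e_{\al_i}v_1=0$ for $i>1$ (the required entry of $\pi(e_{\al_i})$ is zero), while $e_{\al_1}v_1$ is a nonzero scalar multiple of $v_2$ and $e_{\al_1}v_2=0$, so $e_{\al_1}^2v_1=0$. Consequently every generator of $J^+$ annihilates $v_1$, and the natural map $U_q(\n_+)\to\C^n$, $x\mapsto xv_1$, descends to an epimorphism $U_q(\n_+)/J^+\twoheadrightarrow(\text{conatural})$. Cyclicity of $v_1$ is immediate since each $v_i$ is a nonzero scalar multiple of $e_{\al_{i-1}}\cdots e_{\al_1}v_1$; a dimension count ($n$ on both sides, by the first step) then forces the epimorphism to be an isomorphism.

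The statement for $U_q(\n_-)$ follows by the symmetric argument with $e\leftrightarrow f$, and the left- versus right-conatural variants differ only in the particular $q$-scalars in front of the action, which does not affect the annihilating relations and can in any case be absorbed into the rescaling automorphism $e_{\al_i}\mapsto a_ie_{\al_i}$ (the $\n_+$-analog of the remark preceding the Proposition). The only real computation is the pair of vanishings $e_{\al_i}v_1=0$ for $i>1$ and $e_{\al_1}^2v_1=0$ in the conatural; both reduce to the location of the unique nonzero entry of $\pi(e_{\al_k})$ in the natural representation and are insensitive to the precise antipode convention, so this technical point is routine rather than an obstacle.
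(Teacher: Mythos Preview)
Your argument is correct and follows essentially the same route as the paper's own proof: both identify each of the two representations as a cyclic $U_q(\n_+)$-module whose generator is killed by $e_{\al_1}^2$ and by $e_{\al_i}$ for $i>1$, hence a quotient of the $n$-dimensional module $U_q(\n_+)/J^+\simeq N$, and then invoke the dimension count. Your version is slightly more explicit about the Chevalley transport and about the left/right distinction via the rescaling automorphism, but the underlying idea is identical.
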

\begin{proof}
Indeed, in the case of $U_q(\n_+)$, they are cyclic representations  generated by the vector $w_1$ satisfying $e_{\al_1}^2 w_1=e_{\al_i} w_1=0$,
$i>2$. Hence they are quotients of $N$ by some submodules. Since their dimension is $n$, those submodules are zero, and the quotients are isomorphic to $N$. The case of $U_q(\n_-)$ is checked similarly.
\end{proof}
\begin{corollary}
\label{quot_nilp}
Let $V$ be a $U_q(\n_+)$-module and regard $\C^n$ as the conatural $U_q(\n_+)$-module. Then $\Hom_{U_q(\n_+)}(\C^n, V)= \{v\in V| e_{\al_1}^2v=e_{\al_i}v=0,\> i>1\}$.
\end{corollary}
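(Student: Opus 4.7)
The plan is to combine the preceding Corollary with a Chevalley-involution transfer of the Proposition, and then invoke the universal property of a cyclic module presented as a quotient of $U_q(\n_+)$ by a left ideal.

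First I would replace the conatural $U_q(\n_+)$-module structure on $\C^n$ by the isomorphic contragredient one, as permitted by the preceding Corollary. In the contragredient realization, $e_{\al_1}w_1=w_2$, $e_{\al_1}^2w_1=e_{\al_1}w_2=0$, and $e_{\al_i}w_1=0$ for all $i>1$, so $w_1$ is a cyclic generator of $\C^n$ that is annihilated by precisely the elements featured in the statement.

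Next I would transfer the Proposition from $U_q(\n_-)$ to $U_q(\n_+)$ via the Chevalley involution $f_\al\leftrightarrow e_\al$, together with the Dynkin-diagram inversion used in the definition of the contragredient representation. This yields an isomorphism of $U_q(\n_+)$-modules $\C^n\cong U_q(\n_+)/I$, where $I$ is the left ideal generated by $e_{\al_1}^2$ and by $e_{\al_i}$ for $i>1$, with $w_1$ corresponding to the class of $1$. Verifying this transfer is routine, as the Proposition's proof uses only the presentation by generators and relations, which is symmetric under the involution.

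Finally, the universal property of the cyclic quotient $U_q(\n_+)/I$ gives a natural bijection between $U_q(\n_+)$-homomorphisms $U_q(\n_+)/I\to V$ and elements $v\in V$ annihilated by $I$, sending $\phi$ to $\phi(1+I)$. Since $I$ is generated as a left ideal by $e_{\al_1}^2$ and $\{e_{\al_i}\}_{i>1}$, annihilation by $I$ is equivalent to $e_{\al_1}^2v=0$ and $e_{\al_i}v=0$ for $i>1$. Composing with the isomorphism $\C^n\cong U_q(\n_+)/I$ identifies $\phi$ with the image of $w_1$, yielding the asserted description of $\Hom_{U_q(\n_+)}(\C^n,V)$. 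I do not anticipate a genuine obstacle here; the only point to check carefully is that the presentation of $\C^n$ as $U_q(\n_+)/I$ is actually established by the combination of the Proposition and the Chevalley involution, which is a direct adaptation of the argument already given.
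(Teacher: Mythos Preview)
Your proposal is correct and follows essentially the same route as the paper: the conatural $U_q(\n_+)$-module $\C^n$ is cyclic on $w_1$, the preceding Corollary and the Chevalley transfer of the Proposition present it as $U_q(\n_+)/I$ with $I$ generated by $e_{\al_1}^2$ and $e_{\al_i}$ for $i>1$, and then the universal property of a cyclic quotient gives the Hom-set description. The paper compresses all of this into one sentence; your version simply unpacks the implicit steps (and the Dynkin inversion you mention is not actually needed, since the contragredient module is already described as Chevalley involution composed with the natural representation).
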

\begin{proof}
Since the module $\C^n$ is cyclic, every homomorphism from $\Hom_{U_q(\n_+)}(\C^n, V)$ is determined by the assignment $1\mapsto v$, where the vector $v$ annihilates the ideal $J$.
\end{proof}
\section{Singular vectors.}
Denote by $\l=\g\l(n_1)\oplus \ldots \oplus \g\l(n_k)\subset \g\l(n)$ the stabilizer Lie algebra of the point $o\in O$.
Put $\p^\pm=\l+\n_\pm$ to be the parabolic subalgebras relative to $\l$.
The universal enveloping algebras $U(\l)$ and $U(\p^\pm)$
are quantized as Hopf subalgebras in  $U_q(\g)$.
So  $U_q(\l)$ is generated by $\{e_\al,f_\al\}_{\al\in \Pi^+_\l}$ over $U_q(\h)$ and $U_q(\p^\pm)$ are generated by $U_q(\n_\pm)$ over $U_q(\l)$.

For every $\la \in \frac{1}{\hbar}\h^*\oplus \h^*$ define a one-dimensional representation of $\U_q(\b_+)$ by
 $q^{\pm h_\al}\mapsto q^{\pm (\la,\al)}$,  $e_\al\mapsto 0$. Denoting it by $\C_\la$,
consider the Verma module $\hat M_\la=\U_q(\g)\tp_{\U_q(\b_+)}\C_\la$.
Let $\c_\l\subset \h$ be the center of $\l$ and $\c^*_\l\subset \h^*$ be the subset orthogonal to $\Pi^+_\l$.
Suppose that $\la\in \frac{1}{\hbar}\c_\l^*\oplus \c_\l^*$, so that $(\la,\al)=0$ for all $\al\in \Pi^+_\l$.
 For such $\la$, the $\U_q(\b_+)$-module  $\C_\la$ extends to a $\U_q(\p^+)$-representation, and  $\hat M_\la$ admits a projection
onto the parabolic Verma module $M_\la=\U_q(\g)\tp_{\U_q(\p^+)}\C_\la$. With
$x_i=q^{2(\la+\rho,\ve_{m_i})-2(\rho,\ve_1)}$, where $m_i=n_1+\ldots+n_{i-1}+1$,  $i=1,\ldots,k$,
the quantum conjugacy class $\C_\hbar[O]$ is realized by operators on $M_\la$.

Recall that a non-zero vector $v$ in a $U_q(\g)$-module is called singular if it generates  the trivial $U_q(\n_+)$-submodule, i.e. $e_\al v=0$, for all $\al \in \Pi^+$.
\begin{lemma}
\label{dual-contragredient}
Let $W$ be a finite dimensional $U_q(\g)$-module and $W^*$ its right dual module. Let $Y$ be a $U_q(\g)$-module. Singular vectors in $W\tp Y$
are parameterized by homomorphisms  $W^*\to Y$ of $U_q(\n_+)$-modules.
\end{lemma}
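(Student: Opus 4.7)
The plan is to invoke the standard tensor--Hom correspondence, adapted to the Hopf-algebraic setting of $U_q(\g)$. For finite-dimensional $W$, there is a natural vector space isomorphism $W \tp Y \cong \Hom(W^*, Y)$; explicitly, fix a weight basis $\{w_i\}$ of $W$ with dual basis $\{w^i\}$ of $W^*$, and associate to $v = \sum_i w_i \tp y_i$ the linear map $\phi_v \colon w^i \mapsto y_i$, renormalized by a weight-dependent scalar that encodes the pivotal structure of the category of $U_q(\g)$-modules. This gives a bijection of vector spaces which I would use as the framework for the whole argument.

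The core of the proof is a direct matching of two conditions on $v$ and $\phi_v$. On one hand, I would expand the singular vector condition $e_\al v = 0$ using $\Delta(e_\al) = e_\al \tp 1 + q^{h_\al} \tp e_\al$, which, read off in the weight basis, gives a system of linear relations among the $y_i$ and the $e_\al y_j$, one for each simple root $\al$ and each basis weight component. On the other hand, I would expand the $U_q(\n_+)$-homomorphism condition $\phi_v(e_\al u) = e_\al \phi_v(u)$ using the right-dual action together with the identity $\gm^{-1}(e_\al) = -e_\al q^{-h_\al}$, producing a parallel system. A short calculation shows that after the weight-dependent scalar is absorbed into $\phi_v$, the two systems coincide term-by-term, which yields the claimed bijection.

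The main obstacle is tracking the various powers of $q^{h_\al}$ that arise from the coproduct on one side and from the antipode on the other: they contribute in opposite directions, and the bijection between $v$ and $\phi_v$ must be twisted by a scalar of the form $q^{(2\rho,\,\wt w_i)}$ --- consistent because of the identity $(2\rho, \al) = (\al,\al)$ for simple roots $\al$ of $\g\l(n)$ --- to reconcile them. Once this twist is in place, the correspondence between singular vectors and $U_q(\n_+)$-homomorphisms follows by direct verification; the statement can then be viewed as the restriction to $U_q(\n_+) \subset U_q(\g)$ of the rigidity adjunction $\Hom_{U_q(\g)}(\C, W \tp Y) \cong \Hom_{U_q(\g)}(W^*, Y)$, passing from $U_q(\g)$-invariants (intertwiners) to $U_q(\n_+)$-invariants (singular vectors).
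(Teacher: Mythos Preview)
Your approach is correct and essentially matches the paper's own proof: both write a candidate singular vector as $u=\sum_i w_i\tp y_i$ in a weight basis, expand $e_\al u=0$ via $\Delta(e_\al)=e_\al\tp 1+q^{h_\al}\tp e_\al$, and recognize the resulting system $e_\al y_i=-q^{-(\al,\nu_i)}\sum_j\pi(e_\al)_{ji}y_j$ as the right-dual $U_q(\n_+)$-action on $W^*$, so that $v_i\mapsto y_i$ furnishes the homomorphism. Your explicit $q^{(2\rho,\wt w_i)}$ twist and the closing rigidity-adjunction remark are refinements the paper does not spell out---the paper simply records the dual-action formula in the form that already matches the singular-vector system---but the underlying computation is the same.
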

\begin{proof}
Choose a weight basis $\{w_i\}_{i=1}^d\subset W$,  where $d=\dim W$. Suppose that
$u\in W\tp Y$ is a singular vector,
$
u=\sum_{i=1}^d w_i\tp y_i,
$ for some
$y_i\in Y$. Let $\pi\colon U_q(\g)\to \End(W)$ denote the representation homomorphism.
We have, for $\al \in \Pi^+$,
$$
e_\al u=\sum_{i=1}^d \sum_{j=1}^d \pi(e_\al)_{ij}w_j\tp y_i+\sum_{i=1}^d q^{(\al,\nu_i)}w_i\tp e_\al y_i
$$
where $\nu_i$ is the weight of $w_i$.
So $e_\al  u=0$ is equivalent to
$
e_\al y_i=-q^{-(\al,\nu_i)}\sum_{j=1}^d  \pi(e_\al)_{ji}y_j.
$
The vector space $\Span\{y_i\}_{i=1}^d$ supports the right dual representation of $U_q(\n_+)$, provided
$y_i$ are linear independent. In general, it is a quotient of the right dual representation.
\end{proof}
\noindent
In particular, singular vectors in $Y\simeq \C\tp Y$ generate trivial $U_q(\n_+)$-modules, which recovers their definition.

Further we describe singular vectors  of certain weights in $\hat M_\la$ and $\C^n\tp \hat M_\la$. We need a few technical facts about $\hat M_\la$.
We define "dynamical root vectors" $\check{f}_{\al}\in U_q(\b_-)$  for all $\al \in R^+$.
For $\al \in \Pi^+$ we put $\check{f}_{\al}=f_\al$. If  $\al=\al_i+\ldots+\al_j=\al_i+\bt$,
we proceed recursively by
\be
\label{dyn_root_vectors}
\check{f}_{\al}=f_{\al_i}\check{f}_{\bt}\frac{q^{h_{\bt} + (\rho,\bt)}-q^{-h_{\bt} - (\rho,\bt)}}{q-q^{-1}}-\check{f}_{\bt}f_{\al_i}\frac{q^{h_{\bt} + (\rho,\bt)-1}-q^{-h_{\bt} - (\rho,\bt)+1}}{q-q^{-1}}.
\ee
Note that $q^{h_\bt}$ is well defined as an element of $U_q(\h)$ for $\bt\in \Z \Pi^+$.
The Cartan coefficients in $\check{f}_{\bt}$ commute with $f_{\al_i}$ and can be gathered on the right.
 By $\check{f}_\al(\la)$ we understand
an element from $U_q(\n_-)$ obtained through specialization of the coefficients at weight $\la$.
Clearly $\check{f}_\al(\la)v_\la=\check{f}_\al v_\la$.

Let $\g_{ln}\subset \g$ denote the subalgebra
$\g\l(n-l+1)$ with the root system $\{\al_{l},\ldots, \al_{n-1}\}$, $l=1,\ldots, n-1$.
The vectors $\check f_\al$ are generators of the Mickelsson algebras associated with
filtration $\g_{nn}\subset \ldots \subset \g_{1n}=\g$, \cite{Zh}. Their basic property is the equality
\be
e_{\al_j}\check{f}_{\al}^m v_\la=\dt_{ji}[m]_q [(\la+\rho,\al)-m]_q\check{f}_{\bt}\check{f}_{\al}^{m-1}v_\la.
\label{basic_dyn}
\ee
for any Verma module $\hat M_\la$ and any $m\in \N$, see e.g. \cite{M5}.
It is convenient to extend (\ref{dyn_root_vectors}) by $\check{f}_\al=1$ for $\al=0$ and $\check{f}_\al=0$ for
$\al\in -R^+$.  Then (\ref{basic_dyn}) is valid for all $\al>0$.

The following fact about $\check{f}_\al v_\la$ holds true. Its proof  can be found e.g. in \cite{M5}.
\begin{propn}
\label{singular}
Let $\al=\ve_i-\ve_j\in R^+$.
The vector $\check{f}_\al v_\la\in \hat M_\la $ is not vanishing at all $\la$.
It is  singular with respect to $U_q(\g_{i+1\sms n})$. Up to a scalar factor, it is a unique
$U_q(\g_{i+1\sms n})$-singular vector of weight $\la-\al$.
It is $U_q(\g)$-singular {\em iff} $q^{2(\la+\rho,\al)}=q^2$.
Up to a scalar factor, it is a unique singular vector of weight $\la-\al$.
\end{propn}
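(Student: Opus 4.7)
The plan is to induct on the length $\ell(\al):=j-i$ of $\al=\ve_i-\ve_j\in R^+$. In the base case $\ell(\al)=1$, $\al=\al_i\in\Pi^+$ and $\check{f}_\al=f_{\al_i}$; the vector $f_{\al_i}v_\la$ is a nonzero PBW generator of $\hat M_\la$, is killed by $e_{\al_k}$ for $k\ne i$ via the Chevalley relation $[e_{\al_k},f_{\al_i}]=\dt_{ki}(q^{h_{\al_i}}-q^{-h_{\al_i}})/(q-q^{-1})$, and satisfies $e_{\al_i}f_{\al_i}v_\la=[(\la,\al_i)]_q v_\la$, which vanishes iff $q^{2(\la+\rho,\al_i)}=q^2$, using $(\rho,\al_i)=1$. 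Uniqueness in the base case is immediate from the one-dimensionality of $\hat M_\la[\la-\al_i]$.

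For the inductive step, write $\al=\al_i+\bt$ with $\bt=\ve_{i+1}-\ve_j$. Parts (2) and (4) are immediate from formula (\ref{basic_dyn}) taken at $m=1$,
$$
e_{\al_k}\,\check{f}_\al v_\la=\dt_{ki}\,[(\la+\rho,\al)-1]_q\,\check{f}_\bt v_\la.
$$
Indeed, $\g_{i+1\sms n}$ is generated by Chevalley elements indexed by $k\ge i+1$, all different from $i$, so $e_{\al_k}\check{f}_\al v_\la=0$ there, establishing $U_q(\g_{i+1\sms n})$-singularity. Using the inductive hypothesis $\check{f}_\bt v_\la\ne 0$, the remaining condition $e_{\al_i}\check{f}_\al v_\la=0$ reduces to $[(\la+\rho,\al)-1]_q=0$, equivalent to $q^{2(\la+\rho,\al)}=q^2$, proving (4).

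The hardest step will be the non-vanishing claim (1) at \emph{every} $\la$. My plan is to unfold the recursion (\ref{dyn_root_vectors}) into an expansion of $\check{f}_\al v_\la$ in a PBW basis of $U_q(\n_-)v_\la$ built from Lusztig root vectors $f_{[\ve_a-\ve_b]}$ ordered so that $f_{[\al]}$ is the minimal element, and to show by a parallel induction that the coefficient of $f_{[\al]}v_\la$ in this expansion is a $\la$-independent unit (a monomial in $q^{\pm 1}$); this forces $\check{f}_\al v_\la\ne 0$ pointwise in $\la$. The delicate part is that the Cartan-valued coefficients in (\ref{dyn_root_vectors}) could a priori produce $\la$-dependent zeros, and one has to verify that at the leading PBW order these coefficients collapse to units independent of $\la$.

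For uniqueness in (3) and (5), the plan is to use the PBW factorisation $U_q(\n_-)\cong U_q(\n_-^{\bar P})\tp U_q(\n_-\cap\g\l(i))\tp U_q(\n_-\cap\g_{i+1\sms n})$, where $\bar P$ is the parabolic opposite to the one with Levi $\g\l(i)\oplus\g_{i+1\sms n}$, and to perform a weight count in $\hat M_\la[\la-\al]$. The coefficients of $-\al$ on $\ve_i$ (just outside $\g_{i+1\sms n}$) and $\ve_j$ (inside $\g_{i+1\sms n}$) force any $U_q(\g_{i+1\sms n})$-singular vector of this weight to have its leading PBW component proportional to $f_{\ve_i-\ve_j}v_\la$, yielding a one-dimensional space that must contain $\check{f}_\al v_\la$; this gives (3). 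Part (5) follows from (3) by further imposing $e_{\al_i}$-annihilation on this one-dimensional space.
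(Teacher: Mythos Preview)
The paper itself does not give a proof here; it simply refers to \cite{M5}. Your inductive scheme---using (\ref{basic_dyn}) for parts (2) and (4), reducing (5) to (3), and handling (3) by a PBW/weight argument---is the standard route and is correct in outline.

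Your plan for the non-vanishing claim (1), however, contains a concrete error. The coefficient of the Lusztig root vector $f_{[\al]}$ in a PBW expansion of $\check f_\al v_\la$ is \emph{not} a $\la$-independent unit. Already for $\al=\al_1+\al_2$ one computes directly from (\ref{dyn_root_vectors}) that
\[
\check f_\al v_\la \;=\; [\la_2-\la_3+1]_q\,f_{\al_1}f_{\al_2}v_\la \;-\; [\la_2-\la_3]_q\,f_{\al_2}f_{\al_1}v_\la,
\]
and in either standard PBW basis the $f_{[\al]}$-coefficient is, up to a unit, $[\la_2-\la_3]_q$ or $[\la_2-\la_3+1]_q$, each of which vanishes for suitable $\la$. (There is a second slip: a non-simple positive root cannot be the minimal element of any convex ordering on $R^+$, since convexity forces $\al$ to lie strictly between any $\gamma,\delta\in R^+$ with $\gamma+\delta=\al$.) What \emph{does} work in the example above is the coefficient of the product of simple Chevalley generators in the PBW basis: it equals $q^{\pm(\la_2-\la_3)}$ up to a unit, a never-vanishing power of $q$. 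More structurally, the recursion gives
\[
\check f_\al(\la)\;=\;[z]_q\,f_{\al_i}\check f_\bt(\la)\;-\;[z-1]_q\,\check f_\bt(\la)f_{\al_i},\qquad z=(\la+\rho,\bt),
\]
and since $[z]_q$ and $[z-1]_q$ cannot vanish simultaneously when $q$ is not a root of unity, non-vanishing reduces to the linear independence of $f_{\al_i}\check f_\bt(\la)v_\la$ and $\check f_\bt(\la)f_{\al_i}v_\la$ in $\hat M_\la$. This is the step your argument needs to supply; once you do, the rest of your induction goes through.
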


Further we apply Lemma \ref{dual-contragredient} to $W=\C^n$ and $Y=\hat M_\la$.
\begin{lemma}
\label{conat_n_Verma}
For all $l=1,\ldots,n$, there is a unique $U_q(\b_+)$-submodule in $\hat M_\la$ of lowest weight $\la+\ve_l-\ve_1$.
It is generated by $\check{f}_{\ve_1-\ve_l}v_\la$.
\end{lemma}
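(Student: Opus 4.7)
The plan is to use Proposition~\ref{singular} as the main input, with the interpretation that the lemma is really parameterizing submodules via the conatural relations from Corollary~\ref{quot_nilp}. By Lemma~\ref{dual-contragredient} combined with Corollary~\ref{quot_nilp} (applied in reverse), a $U_q(\b_+)$-submodule $V\subset\hat M_\la$ of lowest weight $\la+\ve_l-\ve_1$ corresponds to a weight vector $v\in\hat M_\la[\la+\ve_l-\ve_1]$ satisfying
\[
e_{\al_1}^2 v = 0, \qquad e_{\al_i}v = 0 \text{ for all } i>1,
\]
namely, the image under $\phi\colon(\C^n)^*\to\hat M_\la$ of the cyclic generator of the conatural $U_q(\n_+)$-module $\C^n$; the submodule is then $V = U_q(\n_+)v = U_q(\b_+)v$. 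It therefore suffices to show that, up to a scalar, $\check f_{\ve_1-\ve_l}v_\la$ is the unique such vector.

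For existence, I would first invoke Proposition~\ref{singular}: the vector $\check f_{\ve_1-\ve_l}v_\la$ is nonzero, has weight $\la-(\ve_1-\ve_l)=\la+\ve_l-\ve_1$, and is $U_q(\g_{2n})$-singular, which yields $e_{\al_i}\check f_{\ve_1-\ve_l}v_\la=0$ for $i>1$. To verify the remaining relation $e_{\al_1}^2 \check f_{\ve_1-\ve_l}v_\la=0$, I would use a weight-lattice argument: any vector of weight $\la+\ve_l-\ve_1$ in $\hat M_\la$ has its $e_{\al_1}^2$-image lying in the weight space $\la+\al_1-(\al_2+\ldots+\al_{l-1})$, where the coefficient of $\al_1$ is positive. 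This weight is not in the support $\la-\Z_{\geq 0}\Pi^+$ of $\hat M_\la$, so the image is automatically zero. (One can cross-check directly using the recursion~(\ref{dyn_root_vectors}) and formula~(\ref{basic_dyn}): $e_{\al_1}\check f_{\ve_1-\ve_l}v_\la=[(\la+\rho,\ve_1-\ve_l)-1]_q\check f_{\ve_2-\ve_l}v_\la$, and a second $e_{\al_1}$ lands in the empty weight space.) The degenerate case $l=1$ gives the trivial submodule $\C v_\la$.

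Uniqueness is then immediate from Proposition~\ref{singular}. The condition $e_{\al_1}^2 v=0$ is automatic for any $v$ of weight $\la+\ve_l-\ve_1$ by the same weight-lattice argument, so the only effective constraint is $e_{\al_i}v=0$ for $i>1$, that is, $v$ is a $U_q(\g_{2n})$-singular vector of weight $\la-(\ve_1-\ve_l)$. Proposition~\ref{singular} guarantees that such a vector is unique up to scalar and equals $\check f_{\ve_1-\ve_l}v_\la$, so the generated submodule is unique. The main obstacle is the conceptual step of isolating the right interpretation of "lowest weight $U_q(\b_+)$-submodule" via the annihilation conditions of Corollary~\ref{quot_nilp}; once that identification is made, the technical content collapses to Proposition~\ref{singular} plus a one-line check that higher powers of $e_{\al_1}$ vanish on weight grounds.
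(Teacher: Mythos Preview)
Your proposal is correct and follows essentially the same route as the paper's proof: both reduce the problem via Corollary~\ref{quot_nilp} to finding a $U_q(\g_{2n})$-singular vector of weight $\la+\ve_l-\ve_1$ and then invoke Proposition~\ref{singular} for existence and uniqueness, noting that $e_{\al_1}^2$ vanishes automatically. You simply spell out the weight-lattice reason for that automatic vanishing, which the paper leaves implicit.
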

\begin{proof}
Proposition \ref{singular} implies that $\check{f}_{\ve_1-\ve_l}v_\la$ is a unique, up to a factor, $U_q(\g_{2\sms n})$-singular vector
 of this weight. It automatically satisfies the equation $e_{\al_1}^2\check{f}_{\ve_1-\ve_l}v_\la=0$ and generates
a unique $U_q(\b_+)$-submodule, which is a quotient of conatural, by Corollary \ref{quot_nilp}.
\end{proof}

Put $\la_i=(\la, \ve_i)\in \frac{1}{\hbar}\C$, $i=1,\ldots,n$, and present the singular vectors in $\C^n\tp \hat M_\la$ explicitly.
\begin{corollary}
\label{hat u_l}
Up to a scalar factor, the singular vector in $\C^n\tp \hat M_\la$  of weight $\la+\ve_l$, $l=1,\ldots,n$, is given by
$
\hat u_l=\sum_{i=1}^l (-q)^{i-1}\prod_{j=1}^{i-1}[\la_j-\la_l + l-j-1]_qw_i\tp \check{f}_{\ve_i-\ve_l}v_\la,
$
\end{corollary}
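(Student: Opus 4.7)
The plan is to combine Lemma \ref{dual-contragredient}, Lemma \ref{conat_n_Verma}, and the Mickelsson-type identity (\ref{basic_dyn}). Writing $\hat u_l=\sum_i w_i\tp y_i$ with $y_i$ of weight $\la+\ve_l-\ve_i$, weight considerations force $y_i=0$ for $i>l$. Specializing the formula in the proof of Lemma \ref{dual-contragredient} to $\al=\al_k$, for which the natural-representation matrix $\pi(e_{\al_k})$ has only the single nonzero entry $\pi(e_{\al_k})_{k+1\,k}=1$, collapses the singular-vector equation into the ladder $e_{\al_k}y_k=-q^{-1}y_{k+1}$ (using $(\al_k,\ve_k)=1$) together with the vanishings $e_{\al_k}y_j=0$ for $j\neq k$.

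The second step is to pin down $y_1$. The off-diagonal conditions at $k\geq 2$ say it is $U_q(\g_{2\sms n})$-singular of weight $\la-(\ve_1-\ve_l)$, and by Proposition \ref{singular} (equivalently by Lemma \ref{conat_n_Verma}) this forces $y_1\propto\check{f}_{\ve_1-\ve_l}v_\la$; I normalize the scalar to $1$. The ladder then propagates $y_{i+1}=-q\,e_{\al_i}y_i$ down the chain, and the coefficient can be read off by iterating the $m=1$ instance of (\ref{basic_dyn}), namely
\[
e_{\al_i}\check{f}_{\ve_i-\ve_l}v_\la=[(\la+\rho,\ve_i-\ve_l)-1]_q\,\check{f}_{\ve_{i+1}-\ve_l}v_\la,
\]
combined with the routine identity $(\rho,\ve_i-\ve_l)=l-i$ for $\g\l(n)$. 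The bracket then becomes $[\la_i-\la_l+l-i-1]_q$, yielding the recursion $c_{i+1}=-q[\la_i-\la_l+l-i-1]_q c_i$ and, upon iteration, the claimed closed-form product.

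What still has to be checked is that the constructed $\hat u_l$ satisfies the \emph{off-diagonal} vanishings $e_{\al_k}\check{f}_{\ve_i-\ve_l}v_\la=0$ for $k\neq i$ and $i\leq l$. For $k\geq i+1$ this is precisely the $U_q(\g_{i+1\sms n})$-singularity of Proposition \ref{singular}; for $k\leq i-2$ it follows by commuting $e_{\al_k}$ past $\check{f}_{\ve_i-\ve_l}$ (disjoint supports of roots) and killing $v_\la$. The case $k=i-1$ is the only slightly subtle one and is what I expect to be the main obstacle: here one argues by weight that $\la-(2\ve_i-\ve_{i-1}-\ve_l)=\la-(-\al_{i-1}+\al_i+\dots+\al_{l-1})$ has a negative coefficient at $\al_{i-1}$ in its simple-root expansion, hence is not a weight of the Verma module $\hat M_\la$. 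Once this borderline weight check is settled, uniqueness of $\hat u_l$ up to a scalar follows from the uniqueness clause for $y_1$.
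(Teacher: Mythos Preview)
Your proof is correct and follows essentially the same route as the paper: invoke Lemma~\ref{conat_n_Verma} to pin down $y_1$, then iterate the ladder $y_{i+1}=-q\,e_{\al_i}y_i$ using the $m=1$ case of (\ref{basic_dyn}). The paper's version is slightly more economical in that once $y_1=\check{f}_{\ve_1-\ve_l}v_\la$ is known to generate a conatural $U_q(\n_+)$-submodule (which is precisely the content of Lemma~\ref{conat_n_Verma} via Corollary~\ref{quot_nilp}), the off-diagonal vanishings $e_{\al_k}y_i=0$ for $k\neq i$ are automatic consequences of the module homomorphism, so the direct checks in your last paragraph, while correct, are redundant.
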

\begin{proof}
In accordance with Lemma \ref{conat_n_Verma}, put $y_1=\check{f}_{\ve_1-\ve_l}v_\la$ in $\hat u_l=\sum_{i=1}^n w_i\tp y_i$
and apply formula (\ref{basic_dyn}), $m=1$, to $y_{i+1}=-qe_{\al_i}y_i$ for $i>1$ taking into account $\check{f}_{\ve_i-\ve_l}=0$, $i>l$.
\end{proof}

\section{The $U_q(\g)$-module $\C^n\tp \hat M_\la$.}
We assume that $\la$ is an arbitrary weight from $\frac{1}{\hbar}\h^*\oplus \h^*$ unless specified otherwise.
Define $\hat V^\la_j\subset \C^n\tp \hat M_\la$ as a $\U_q(\n_-)$-submodule generated by $\{w_i\tp v_\la\}_{i=1}^j$.
It is also a $\U_q(\g)$-submodule, and the sequence $\hat V^\la_1\subset \ldots \subset \hat V^\la_n = \C^n\tp \hat M_\la$
forms a filtration. Its graded components $\hat V^\la_{j}/\hat V^\la_{j-1}$ are generated by the image of $w_j\tp v_\la$,
which is the highest weight vector. It is known that $\hat V^\la_{j}/\hat V^\la_{j-1}$ are isomorphic to
the Verma modules $\hat M_{\la+\ve_j}$  and determine the spectrum $\{\hat x_i\}_{i=1}^n$
of the  $\U_q(\g)$-invariant operator $\Q$, with $\hat x_i=q^{2(\la+\rho,\ve_i)-2(\rho,\ve_1)}=q^{2(\la,\ve_i)-2i+2}$, \cite{M1}.
The shifted  $S_n$-action  on $\h^*$
by $\si\colon\la\mapsto \si\cdot \la=\si(\la+\rho)-\rho$ permutes $\hat x_i$ to $\hat x_{\si^{-1}(i)}$.

Next we explain a diagram technique we use to study $\C^n\tp \hat M_\la$.
 We call $\psi_{ij}=f_{\al_{i}}\ldots f_{\al_{j-1}}\in U_q(\n_-)$ principal monomial of weight $\ve_j-\ve_i\in -R^+$. All other Chevalley monomials of this weight are obtained from $\psi_{ij}$ by permutation of factors. If $s\in S_{j-i}$
is such a permutation, then we write
$\psi_{ij}^s=f_{\al_{s(i)}}\ldots f_{\al_{s(j-1)}}=f^s_{\al_{i}}\ldots f^s_{\al_{j-1}}$.
We associate with $\psi_{ij}^s$
a horizontal graph with nodes $\{v^s_m\}_{m=i}^j\subset \hat M_\la$
setting $v^s_i=v_\la$, $v^s_{m}=f_{j+i-m}^s\ldots f_{j-1}^s v_\la$, $m=i+1,\ldots ,j$, and arrows
 $f_{j+i-m}^s\colon v^s_{m-1}\mapsto v^s_{m}$,
\begin{center}
\begin{picture}(200,35)
\put(0,0){$v^s_{j}$}
\put(10,15){\circle{3}}
\put(45,15){\vector(-1,0){30}}
\put(50,15){\circle{3}}
\put(100,15){$\ldots$}
\put(85,15){\vector(-1,0){30}}
\put(45,0){$v^s_{j-1}$}
\put(160,15){\circle{3}}
\put(155,15){\vector(-1,0){30}}
\put(150,0){$v^s_{i+1}$}
\put(195,15){\vector(-1,0){30}}
\put(200,15){\circle{3}}
\put(190,0){$v^s_{i}$}
\put(25,20){$f_{i}^s$}
\put(65,20){$f_{\al_{i+1}}^s$}
\put(135,20){$f_{j-2}^s$}
\put(175,20){$f_{j-1}^s$}

\end{picture}
\end{center}
In particular,  $v^s_{m}=v_{m}=\psi_{j+i-m,j}v_\la$ for $s=\id$. We combine it with the graph of natural representation oriented vertically to get the plane graph
\begin{center}
\begin{picture}(300,180)
\put(150,20){$D^s$}
\put(5,160){$\scriptstyle{w^{j}_{i}}$}
\put(60,160){$\scriptstyle{w^{j-1}_{i}}$}
\put(115,160){$\scriptstyle{w^{j-2}_{i}}$}
\put(225,160){$\scriptstyle{w^{i+1}_i}$}
\put(275,160){$\scriptstyle{w^i_i}$}

\put(55,160 ){\vector(-1,0){30}}\put(108,160 ){\vector(-1,0){30}}\put(163,160 ){\vector(-1,0){30}}
\put(170,160 ){$\ldots$}
\put(218,160 ){\vector(-1,0){30}}
\put(270,161 ){\vector(-1,0){30}}

\put(40,167){$\scriptstyle{f^s_{i} }$}\put(91,167){$\scriptstyle{f^s_{i+1}}$}
\put(148, 167){$\scriptstyle{f^s_{i+2}}$}\put(201,167){$\scriptstyle{f^s_{j-2}}$}\put(250,167){$\scriptstyle{f^s_{j-1}}$}

\put(63,157 ){\vector(0,-1){18}}\put(119,157){\vector(0,-1){18}}\put(228,157 ){\vector(0,-1){18}}
\put(278,157 ){\vector(0,-1){18}}

\put(60,130){$\scriptstyle{w^{j-1}_{i+1}}$}
\put(115,130){$\scriptstyle{w^{j-2}_{i+1}}$}
\put(225,130){$\scriptstyle{w^{i+1}_{i+1}}$}
\put(275,130){$\scriptstyle{w^i_{i+1}}$}

\put(108,131 ){\vector(-1,0){30}}\put(163,131 ){\vector(-1,0){30}}
\put(170,131 ){$\ldots$}
\put(218,131 ){\vector(-1,0){30}}
\put(270,131 ){\vector(-1,0){30}}

\put(119,127){\vector(0,-1){18}}\put(228,127 ){\vector(0,-1){18}}
\put(278,127 ){\vector(0,-1){18}}

\put(115,100){$\scriptstyle{w^{j-2}_{i+2}}$}
\put(225,100){$\scriptstyle{w^{i+1}_{i+2}}$}
\put(275,100){$\scriptstyle{w^i_{i+2}}$}

\put(163,101 ){\vector(-1,0){30}}
\put(170,101 ){$\ldots$}
\put(218,101 ){\vector(-1,0){30}}
\put(270,101 ){\vector(-1,0){30}}

\put(228,97 ){\vector(0,-1){18}}
\put(278,97 ){\vector(0,-1){18}}
\put(227,65 ){$\vdots$}
\put(277,65 ){$\vdots$}

\put(187,67){\circle{1}}\put(177,72){\circle{1}}\put(167,77){\circle{1}}

\put(228,60 ){\vector(0,-1){18}}\put(278,60 ){\vector(0,-1){18}}
\put(278,32 ){\vector(0,-1){18}}

\put(225,35){$\scriptstyle{w^{i+1}_{j-1}}$}\put(274,35){$\scriptstyle{w_{j-1}^i}$}
\put(270,38 ){\vector(-1,0){30}}

\put(274,5){$\scriptstyle{w_{j}^i}$}

\put(290,22){$\scriptstyle{f_{j-1}}$}\put(290,47){$\scriptstyle{f_{j-2}}$}
\put(290,65 ){$\vdots$}
\put(290,90){$\scriptstyle{f_{i+2}}$}
\put(290,118){$\scriptstyle{f_{i+1}}$}\put(290,148){$\scriptstyle{f_i}$}

\end{picture}
\end{center}
where $w_l^{m}=w_l\tp v^s_m$.  By construction, $w_l^{m}$ depends on $s$, which dependence is suppressed in order to shorten the notation. The horizontal arrows designate the action of the corresponding operator
on the tensor factor $\hat M_\la$ while the vertical on the tensor factor  $\C^n$. If the operators assigned
to horizontal and vertical arrows applied to a node are distinct, the horizontal arrow amounts to the action
on the entire tensor product $\C^n\tp \hat M_\la$. In all cases, the comultiplied operator
associated with the horizontal arrow
maps $\C w_l^m\subset \C^n\tp \hat M_\la$ onto $\C w_l^{m+1}$ modulo $\C w_{l+1}^m$
(a direct consequence of the coproduct formula).
This immediately implies the following.
\begin{lemma}
\label{triangle}
Suppose a column $\{w_p^m\}_{p=r}^t$ for some $r<t$ lies in a $U_q(\g)$-submodule $M\subset \C^n\tp \hat M_\la$.
Then the column $\{w_p^{m+1}\}_{p=r}^{t-1}$ lies in $M$. If the generators assigned to vertical and horizontal
arrows with the origin $w_t^m$ are distinct, then $\{w_p^{m+1}\}_{p=r}^t$ lies in $M$.
\end{lemma}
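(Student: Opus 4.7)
The plan is to apply, to each node $w_p^m$ of the given column, the comultiplied Chevalley generator associated with the horizontal arrow originating at that node, and to read off the result from the coproduct formula together with the explicit action on $\C^n$.

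Let $f_\al = f^s_{j+i-m-1}$ be the label of the horizontal arrow $w_l^m\to w_l^{m+1}$, so that $\al=\al_{s(j+i-m-1)}$. Using $\Delta(f_\al)=1\tp f_\al+f_\al\tp q^{-h_\al}$ and the fact that $f_\al v_m^s=v_{m+1}^s$ by the definition of the horizontal path $v_i^s,v_{i+1}^s,\dots,v_j^s$, I compute
\begin{equation*}
\Delta(f_\al)(w_p\tp v_m^s)=w_p\tp v_{m+1}^s+f_\al w_p\tp q^{-h_\al}v_m^s
=w_p^{m+1}+\dt_{\al,\al_p}\, q^{-(\al,\nu)}\, w_{p+1}^m,
\end{equation*}
where $\nu$ is the weight of $v_m^s$ and I have used $f_{\al_k}w_p=\dt_{kp}w_{p+1}$ from the diagram of the natural representation of $U_q(\n_-)$. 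Thus applying $\Delta(f_\al)$ to any node $w_p^m$ produces $w_p^{m+1}$ modulo $\C w_{p+1}^m$, which is the content of the remark preceding the lemma.

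From this, the first claim follows by downward induction on $p$ from $p=t-1$ to $p=r$. Since $\{w_p^m\}_{p=r}^t\subset M$ and $M$ is stable under $\Delta(f_\al)$, for every $p\in[r,t-1]$ the element $\Delta(f_\al)w_p^m\in M$ differs from $w_p^{m+1}$ by a scalar multiple of $w_{p+1}^m\in M$, so $w_p^{m+1}\in M$. This treats the interior nodes.

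The delicate point is the top node $w_t^m$, for which $w_{t+1}^m$ is not a priori in $M$. Here the hypothesis enters: the vertical arrow out of $w_t^m$ is labeled $f_{\al_t}$, and the horizontal one is labeled $f_\al=f_{\al_{s(j+i-m-1)}}$; if these are distinct, then $\dt_{\al,\al_t}=0$ in the displayed formula, the correction term vanishes, and $\Delta(f_\al)w_t^m=w_t^{m+1}$ lies in $M$. I expect no real obstacle beyond bookkeeping: the key observation is simply that the coproduct of a lower Chevalley generator on $\C^n\tp\hat M_\la$ is "upper triangular" in the column direction of the diagram $D^s$, so a column propagates one step to the right under the horizontal action and loses only its topmost entry, unless the action on $\C^n$ kills that entry.
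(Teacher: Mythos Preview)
Your argument is correct and is exactly the paper's approach: the paper states the lemma as an immediate consequence of the coproduct observation that $\Delta(f_\al)$ sends $\C w_l^m$ to $\C w_l^{m+1}$ modulo $\C w_{l+1}^m$, and you have simply written out that observation and its consequence explicitly. One cosmetic slip: in the diagram $D^s$ the index $p$ increases downward, so $w_t^m$ is the \emph{bottom} node of the column (and the column loses its bottommost entry, not its topmost), though this does not affect the mathematics.
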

In what follows we drop the symbol $s$ when it is clear from the context or arbitrary.
For $l\in [i,j]$ we call  $\Span \{w^{l+i-m}_m\}_{m=i}^{l}$ the $l$-diagonal.
For $l=j$ we call it  principal. Note that for $s=\id$ the principal
diagonal is spanned by $w_m^{j+i-m}=w_m\tp \psi_{mj}v$, $m=i,\ldots,j$.

Recall that the  monomial  $\psi_{ij}$ has weight $\ve_j-\ve_{i}$.
\begin{propn}
\label{principal_filtration}
If $s=\id$, then for all $l\in [i,j]$ the
tensors $w_l\tp \psi_{lj}v$ are proportional to each other modulo $\hat V^\la_{j-1}$:
$$
w_{l}\tp \psi_{lj}v=(-1)^{j-l}q^{\la_j-\la_l+l-j+1-\dt_{lj}} w_{j}\tp v \mod \hat V^\la_{j-1}.
$$
for all $l=i,\ldots, j-1$.
If $s\not=\id$, then $w_i\tp \psi_{ij}^s v\in \hat V^\la_{j-1}$.
\end{propn}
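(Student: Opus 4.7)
The plan is to treat the two claims separately: the identity-permutation case by downward induction on $l$, and the non-identity case by iterated use of Lemma \ref{triangle}. For $s = \id$, I would proceed by descending induction from $l = j$, where $w_j \tp \psi_{jj} v_\la = w_j \tp v_\la$ tautologically with constant $c_j = 1$. The key preliminary is that $w_{l-1} \tp \psi_{lj} v_\la$ already lies in $\hat V^\la_{j-1}$: every factor $f_{\al_k}$ of $\psi_{lj}$ has $k \geq l > l-1$ and hence annihilates $w_{l-1}$ in the natural representation, so $\Delta(\psi_{lj})(w_{l-1} \tp v_\la) = w_{l-1} \tp \psi_{lj} v_\la$, while the starting vector $w_{l-1} \tp v_\la$ lies in $\hat V^\la_{l-1} \subset \hat V^\la_{j-1}$. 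Applying $\Delta(f_{\al_{l-1}})$ to this vector then yields via the coproduct formula
\[
\Delta(f_{\al_{l-1}})(w_{l-1} \tp \psi_{lj} v_\la) = w_{l-1} \tp \psi_{l-1,j} v_\la + q^{-(\al_{l-1},\, \la+\ve_j-\ve_l)}\, w_l \tp \psi_{lj} v_\la,
\]
and since the left-hand side lies in $\hat V^\la_{j-1}$ the inductive hypothesis at level $l$ produces the recursion $c_{l-1} = -q^{-(\al_{l-1},\,\la+\ve_j-\ve_l)} c_l$. A direct evaluation of this inner product, which splits into a generic subcase $l < j$ and a boundary subcase $l = j$ (the source of the $\dt_{l,j}$ correction in the exponent), shows that the iterated product matches the stated closed form.

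For $s \neq \id$, I would view $w_i \tp \psi^s_{ij} v_\la$ as the corner vertex $w^j_i$ of the plane graph $D^s$ and propagate membership in $\hat V^\la_{j-1}$ column by column using Lemma \ref{triangle}. The initial column $\{w^i_p\}_{p=i}^{j-1} = \{w_p \tp v_\la\}_{p=i}^{j-1}$ lies in $\hat V^\la_{j-1}$ since $w_p \tp v_\la \in \hat V^\la_p \subset \hat V^\la_{j-1}$ for each $p \leq j-1$. Each step $m \to m+1$ preserves the shortened column $\{w^{m+1}_p\}_{p=i}^{t_m - 1}$ inside $\hat V^\la_{j-1}$ unconditionally, and extends to $\{w^{m+1}_p\}_{p=i}^{t_m}$ precisely when the horizontal label $f^s_{j+i-m-1}$ differs from the vertical label $f_{\al_{t_m}}$, i.e.\ when $s(j+i-m-1) \neq t_m$. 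The combinatorial heart is the claim that this extension condition must succeed at least once in the $j-i$ iterations when $s \neq \id$: were it to fail at every step, one would inductively obtain $t_m = j+i-m-1$ and $s(j+i-m-1) = j+i-m-1$ for every $m \in [i, j-1]$, forcing $s = \id$, a contradiction. Consequently the final column $\{w^j_p\}_{p=i}^{t_j}$ still contains the index $p = i$, proving $w^j_i \in \hat V^\la_{j-1}$.

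The main obstacle is the boundary case where the column has collapsed to a single element $\{w^m_i\}$ before $m$ reaches $j$, a regime in which Lemma \ref{triangle} is inapplicable since it requires $r < t$. I would handle this by a direct coproduct expansion: $w^{m+1}_i = \Delta(f^s_{j+i-m-1})(w^m_i)$ whenever $s(j+i-m-1) \neq i$, because then $f^s_{j+i-m-1} w_i = 0$ and the cross-term vanishes. Applying the same ``all-failures forces $s = \id$'' argument to the steps preceding the collapse shows that $s$ must fix $\{i+1, \ldots, j-1\}$, so $s \neq \id$ then forces $s(i) \neq i$ and the last step proceeds as required.
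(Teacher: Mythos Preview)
Your argument follows the same route as the paper: both cases run through the diagram machinery and Lemma~\ref{triangle}. For $s=\id$ the recursion you derive from applying $\Delta(f_{\al_{l-1}})$ to $w_{l-1}\otimes\psi_{lj}v_\la$ is exactly the paper's computation; the only cosmetic difference is that the paper obtains the preliminary fact $w_l\otimes\psi_{l+1,j}v_\la\in\hat V^\la_{j-1}$ by invoking the triangle $D'$, whereas you get it by the direct observation that every factor of $\psi_{lj}$ kills $w_{l-1}$.

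For $s\neq\id$ there is a small logical slip in your boundary-case handling. You correctly note that if the column collapses to a single element before reaching $m=j$, then (since height drops by at most one per step, starting from $j-i$) this happens precisely at $m=j-1$ with no prior extensions, whence $s$ fixes $\{i+1,\ldots,j-1\}$. But $s$ is a permutation of $\{i,\ldots,j-1\}$, so fixing everything except possibly $i$ forces $s(i)=i$ as well, i.e.\ $s=\id$. Your conclusion ``$s\neq\id$ then forces $s(i)\neq i$'' has it backwards; the correct deduction is that the boundary case is vacuous for $s\neq\id$, so no separate direct-coproduct argument is required at all. The paper's proof avoids this detour by pinpointing the extension location explicitly: it lets $l$ be the \emph{rightmost} index displaced by $s$, so that at the superdiagonal node $w_l^{i+j-l-1}$ the horizontal label $f_{s(l)}$ and the vertical label $f_l$ are automatically distinct, guaranteeing the extension at that specific step and letting the remaining iterations of Lemma~\ref{triangle} carry through to $w_i^j$ without ever hitting a height-one column.
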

\begin{proof}
Denote by $D'$ the triangle in $D^s$ above the principal diagonal. By Lemma \ref{triangle},
it lies in $\hat V^\la_{j-1}$ as its right edge does. In particular, $w_l^{j+i-1-l}\in \hat V^\la_{j-1}$ for $l=i, \ldots,j-1$.

Suppose that $s=\id$. Observe  that the operator associated to horizontal and vertical arrows
directed from $w_l^{j+i-1-l}$ is the same and equal to $f_{\al_l}$. Applying $\Delta(f_{\al_l})$ to $w_l^{j+i-1-l}=w_l\tp \psi_{l+1,j}v_\la\in \hat V^\la_{j-1}$ we
get
$$
q^{(\ve_{l+1}-\ve_l,\ve_{j}-\ve_{l+1}+\la)}w_{l+1}^{j+i-1-l}+w_l^{j+i-l}=
q^{-(\ve_l-\ve_{l+1},\la)+(\ve_{l+1},\ve_j-\ve_{l+1})}w_{l+1}\tp \psi_{l+1,j}v_\la+w_l\tp \psi_{lj}v_\la,
$$
which belongs to $\hat V^\la_{j-1}$. This directly implies the first statement.

Suppose that $s\not =\id$.
Let $l$ be the rightmost integer $l\in [i,j)$ displaced by $s$, so that
$\psi_{ij}^s=f_{s(i)}\ldots f_{s(l)}f_{l+1}\ldots f_{j-1}$.
The node $w_l^{i+j-l-1}$ above the principal diagonal belongs to $D'$ and hence to $V^\la_{j-1}$.
Its vertical outward arrow is labeled with $f_l$ while horizontal with $f_{s(l)}\not =f_{l}$. Therefore
 $\Delta(f_{s(l)})$ acts on the column above $w_l^{i+j-l-1}$ inclusive strictly leftward and
maps it isomorphically onto the column rested on $w_{l+1}^{i+j-l-1}$ on the principal diagonal.
This column is the right bound of  a triangle that includes the part of principal diagonal from
$w_{l+1}^{i+j-l-1}$ up. It belongs to $V^\la_{j-1}$, hence the whole triangle including $w^i_j=w_i\tp \psi_{ij}^sv_\la$
belongs to $V^\la_{j-1}$.
\end{proof}

The initial point $o\in O$ determines a partition of the integer interval $[1,n]$ into the disjoint union
of $k$ subsets: $i,j$ are in the same subset if and only if $x_i=x_j$. This partition determines
a partial ordering on $[1,n]$: we write $i\prec j$ iff $i,j$ are from the same subset and $i<j$.
We call a permutation $\si\in S_n$ admissible if it
respects the ordering, i.e. $\si(i)<\si(j)$ once $i\prec j$.
\begin{lemma}
\label{admissible}
For every point $a\in O\cap T$ there is a unique admissible permutation $\si\in S_n$ such that $a=\si o$.
\end{lemma}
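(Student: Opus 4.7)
The plan is to reduce the statement to a standard fact about coset representatives of the Young subgroup $S_{\mathbf{n}} = S_{n_1}\times\cdots\times S_{n_k}$ inside $S_n$. Recall from the preliminaries that the isotropy of $o$ in $S_n$ is precisely $S_{\mathbf{n}}$, so that $O\cap T$ is in bijection with the left coset space $S_n/S_{\mathbf{n}}$, where a coset $\tau S_{\mathbf{n}}$ corresponds to the point $\tau o\in O\cap T$. The partition $[1,n]=B_1\sqcup\cdots\sqcup B_k$ defining the order $\prec$ is exactly the partition into orbits of $S_{\mathbf{n}}$, namely $B_s=\{m_s,m_s+1,\ldots,m_s+n_s-1\}$ with $m_s=n_1+\cdots+n_{s-1}+1$. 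The admissibility condition $i\prec j\Rightarrow\sigma(i)<\sigma(j)$ says precisely that $\sigma$ is strictly increasing on each block $B_s$.

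For existence, I take any $\tau\in S_n$ with $\tau o=a$ and look for $\pi\in S_{\mathbf{n}}$ such that $\sigma:=\tau\pi$ is admissible. Since $\pi$ only permutes elements within each block $B_s$, the set $\{\sigma(i)\mid i\in B_s\}$ equals $\{\tau(i)\mid i\in B_s\}$ for every $s$. Hence the required $\sigma|_{B_s}$ is obtained by listing the elements of the latter set in increasing order; this determines $\pi|_{B_s}$ uniquely, and putting these pieces together defines the desired $\pi$.

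For uniqueness, suppose $\sigma_1,\sigma_2$ are both admissible with $\sigma_1 o=\sigma_2 o=a$. Then $\sigma_2^{-1}\sigma_1\in S_{\mathbf{n}}$, so $\sigma_1=\sigma_2\pi$ for some $\pi\in S_{\mathbf{n}}$. As above, the images of $B_s$ under $\sigma_1$ and $\sigma_2$ coincide as subsets of $[1,n]$, and both $\sigma_1|_{B_s}$ and $\sigma_2|_{B_s}$ list this common set in increasing order. Therefore $\sigma_1|_{B_s}=\sigma_2|_{B_s}$ for each $s$, so $\sigma_1=\sigma_2$.

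There is essentially no obstacle here: the lemma is a combinatorial restatement of the well-known fact that every left coset of a Young subgroup $S_{\mathbf{n}}\subset S_n$ has a unique representative of minimal length (equivalently, one whose values are increasing on each block of the partition), combined with the identification of $O\cap T$ with $S_n/S_{\mathbf{n}}$ already recorded in Section 2.
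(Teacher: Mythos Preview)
Your argument is correct and follows essentially the same line as the paper's: the paper adjusts an arbitrary $\si$ with $\si o=a$ by composing with transpositions $(i,j)\in S_{\nb}$ to fix each bad pair $i\prec j$, $\si(i)>\si(j)$, while you directly sort $\tau$ on each block by a suitable $\pi\in S_{\nb}$; uniqueness in both treatments amounts to the observation that two increasing arrangements of the same set coincide.
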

\begin{proof}
Indeed, if $i\prec j$ and $\si(i)>\si(j)$, the sign of inequality can be changed by combining $\si$ with the flip $(i,j)\in S_{\nb}$. This way, every permutation $\si$ such that $a=\si o$ can be adjusted so as to satisfy the required condition.
Uniqueness is obvious.
\end{proof}
\noindent
Lemma \ref{admissible} defines an embedding $S_n/S_{\nb}\subset S_n$ as a subset  of admissible permutations.
In terms of root systems,
$\si$ is admissible if and only if
$\si(R^+_\l)\subset R^+_\g$  or, equivalently, $\si(\pm R^+_\l)\subset \pm R^+_\g$ or, equivalently, $\si(\Pi^+_\l)\subset R^+_\g$.
Although the stabilizer of the point $\si(o)$ is isomorphic to $\l$, we call it a Levi subalgebra
{\em only if} $\si(\Pi^+_\l)\subset \Pi^+_\g$.

Let $\c^*_{\l,reg}$ denote the subset in $\c^*_{\l}$ such that for $\la\in \frac{1}{\hbar}\c^*_{\l,reg}$
the complex numbers $q^{2(\la,\ve_{m_i})}$, $i=1,\ldots,k$,  are pairwise distinct.
\begin{lemma}
\label{regular}
Suppose that $\la\in\frac{1}{\hbar}\c^*_{\l,reg}$ and $\si\in S_n/S_{\nb}$.
Let $\al \in \Pi^+_\l$ and $\si(\al)=\mu+\nu$ for some $\mu,\nu\in R_\g^+$. Then
$\bigl(\si(\la),\mu\bigr)\not=0\not=\bigl(\si(\la),\nu\bigr)$.
\end{lemma}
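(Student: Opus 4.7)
The plan is to reduce the statement to a purely combinatorial fact about the interaction between admissibility of $\si$ and the block structure coming from $\l$. First I would use that $\al\in\Pi^+_\l$ is a simple root of $\l=\g\l(n_1)\oplus\ldots\oplus\g\l(n_k)$, hence has the form $\al=\ve_i-\ve_{i+1}$ with $i$ and $i+1$ belonging to the same block $B$ of the partition of $[1,n]$ determined by $\nb$. Since $\si\in S_n/S_\nb$ is admissible, $\si$ preserves the order within $B$, so setting $p=\si(i)$ and $q=\si(i+1)$ we obtain $p<q$ and $\si(\al)=\ve_p-\ve_q$. In type $A_{n-1}$ the only way to write a positive root $\ve_p-\ve_q$ as a sum of two positive roots is $\mu=\ve_p-\ve_r$, $\nu=\ve_r-\ve_q$ with $p<r<q$.

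The next key observation is orthogonality: since $\si$ acts by an isometry on $\h^*$ and $\la\in\c_\l^*$ is orthogonal to $\al\in\Pi^+_\l$, one has $\bigl(\si(\la),\mu\bigr)+\bigl(\si(\la),\nu\bigr)=\bigl(\si(\la),\si(\al)\bigr)=0$. Therefore it suffices to rule out $\bigl(\si(\la),\mu\bigr)=0$, and the statement for $\nu$ follows automatically.

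Using $\bigl(\si(\la),\ve_j\bigr)=\bigl(\la,\ve_{\si^{-1}(j)}\bigr)$ and the fact that $\la\in\c_\l^*$ makes $(\la,\ve_j)$ depend only on the block containing $j$, I would compute $\bigl(\si(\la),\mu\bigr)=(\la,\ve_i)-(\la,\ve_{\si^{-1}(r)})$. Regularity of $\la$, in the form $q^{2(\la,\ve_{m_j})}$ pairwise distinct for $j=1,\ldots,k$, implies $(\la,\ve_a)=(\la,\ve_b)$ forces $a$ and $b$ to lie in the same block. Hence $\bigl(\si(\la),\mu\bigr)=0$ would force $\si^{-1}(r)\in B$.

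The main (and in fact only) obstacle is to derive the contradiction from this. Admissibility of $\si$ means $\si$ is strictly increasing on $B$, so from $\si(i)=p<r<q=\si(i+1)$ and $\si^{-1}(r)\in B$ we would get $i<\si^{-1}(r)<i+1$, which is impossible for integers. This contradiction shows $\bigl(\si(\la),\mu\bigr)\neq 0$, and combined with the orthogonality relation above, also $\bigl(\si(\la),\nu\bigr)\neq 0$, completing the proof.
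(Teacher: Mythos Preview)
Your proof is correct and follows essentially the same approach as the paper's: both use the isometry property to transfer the vanishing to $(\la,\si^{-1}(\mu))$, invoke regularity to force $\si^{-1}(\mu)\in R_\l$ (your ``$\si^{-1}(r)\in B$''), and then use admissibility together with simplicity of $\al$ to reach a contradiction. The paper phrases the last step as ``$\al=\si^{-1}(\mu)+\si^{-1}(\nu)$ is a sum of two positive roots of $\l$, contradicting simplicity'', while you unpack this in $\g\l(n)$ coordinates as ``$i<\si^{-1}(r)<i+1$ is impossible for integers''---these are the same contradiction.
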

\begin{proof}
For any $\la\in \frac{1}{\hbar}\c^*_{\l,reg}$, the equality $\bigl(\si(\la),\mu\bigr)=0$ implies $\bigl(\la,\si^{-1}(\mu)\bigr)=0=\bigl(\la,\si^{-1}(\nu)\bigr)$. Therefore $\si^{-1}(\mu)$ and  $\si^{-1}(\nu)$ belong
to $R_\l$ and specifically to $R_\l^+$ since $\si$ is admissible. This contradicts the assumption that $\al$ is a simple root.
\end{proof}

\begin{propn}
\label{si-singular}
Suppose that $\la\in \frac{1}{\hbar}\c^*_\l\oplus \c^*_\l$ and $\si$ is an admissible permutation. For every $\al\in \Pi^+_\l$
the vector $v_{\la,\al}^\si=\check{f}_{\si(\al)} v_{\si\cdot \la}\in \hat M_{\si\cdot \la}$ is singular.
\end{propn}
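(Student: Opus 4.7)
The plan is to reduce the claim directly to Proposition \ref{singular} applied to the weight $\si\cdot\la$ and the positive root $\si(\al)\in R^+_\g$, so the only work is to check that the relevant eigenvalue condition $q^{2(\si\cdot\la+\rho,\si(\al))}=q^2$ holds for every $\al\in\Pi^+_\l$.

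First I would observe that $\si(\al)$ is indeed a positive root of $\g$: since $\si$ is admissible, $\si(R^+_\l)\subset R^+_\g$, and $\al\in\Pi^+_\l\subset R^+_\l$. Writing $\si(\al)=\ve_i-\ve_j$ with $i<j$, Proposition \ref{singular} applied to the Verma module $\hat M_{\si\cdot\la}$ with this root tells us that $\check f_{\si(\al)}v_{\si\cdot\la}$ is automatically $U_q(\g_{i+1\,n})$-singular, and is $U_q(\g)$-singular precisely when the equality $q^{2(\si\cdot\la+\rho,\si(\al))}=q^2$ is satisfied. So the entire proposition reduces to verifying this scalar equality.

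For the key computation I would use the defining identity $\si\cdot\la=\si(\la+\rho)-\rho$, which gives $\si\cdot\la+\rho=\si(\la+\rho)$. Since the symmetric group acts by orthogonal transformations on $\h^*$ in the standard basis $\{\ve_i\}$, this yields
$$
(\si\cdot\la+\rho,\si(\al))=(\si(\la+\rho),\si(\al))=(\la+\rho,\al).
$$
Now $\al\in\Pi^+_\l$, which by the standing assumption $\Pi^+_\l\subset\Pi^+_\g$ means $\al$ is a simple root of $\g$, whence $(\rho,\al)=1$. On the other hand, the hypothesis $\la\in\frac{1}{\hbar}\c^*_\l\oplus\c^*_\l$ means that $\la$ is orthogonal to $\Pi^+_\l$, so $(\la,\al)=0$. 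Combining, $(\la+\rho,\al)=1$ and hence $q^{2(\si\cdot\la+\rho,\si(\al))}=q^{2}$, which is exactly the singularity criterion from Proposition \ref{singular}.

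The main (only) obstacle is the purely notational one of keeping the shifted action and the ordinary Weyl action straight; once the identity $\si\cdot\la+\rho=\si(\la+\rho)$ is invoked and the $S_n$-invariance of $(\cdot,\cdot)$ is used, the whole statement collapses to the two trivial facts $(\la,\al)=0$ and $(\rho,\al)=1$. I would note in passing that Proposition \ref{singular} also implies $v^\si_{\la,\al}\neq 0$, so there is nothing further to verify.
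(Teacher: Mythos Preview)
Your proof is correct and follows essentially the same approach as the paper's. The paper invokes the singularity criterion directly from (\ref{basic_dyn}) in the form $[(\si\cdot\la+\rho,\si(\al))-1]_q=0$ rather than via Proposition \ref{singular}, but this is the same condition, and the reduction to $(\la+\rho,\al)=1$ via $\si\cdot\la+\rho=\si(\la+\rho)$ and $S_n$-invariance of the form is identical.
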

\begin{proof}
As follows from (\ref{basic_dyn}) the vector
$v_{\la,\al}^\si$ is singular if and only if
$$
0=[(\si\cdot \la+\rho,\si\al)-1]_q=
[\bigl(\si(\la+\rho),\si(\al)\bigr)-1]_q=
[(\la+\rho,\al)-1]_q.
$$
This is the case for all $\al \in \Pi^+_\l$ since $\al=\ve_i-\ve_{i+1}$ for $i\prec i+1$ and $(\rho,\al)=1$.
\end{proof}

Introduce the subset $I_\l=\{m_i\}_{i=1}^k\subset [1,n]$
and  its complement $\bar I_\l$ in $[1,n]$.
Elements of $I_\l$ enumerate the highest weights of the irreducible $\l$-submodules in $\C^n$.
For a permutation $\si \in S_n/S_{\nb}$ put $I_\l^\si=\si (I_\l)$ and  $\bar I_\l^\si=\si (\bar I_\l)$.
Order the set $I^\si_\l=\{m_1^\si,\ldots, m_k^\si\}$ by $m_i^\si<m_{j}^\si$ for $i<j$. Note with care that $m_i^\si\not=\si(m_i)$.

Suppose that $\la\in \frac{1}{\hbar}\c^*_\l\oplus \c^*_\l$ and $\si$ is an admissible permutation. By Proposition
\ref{si-singular}, the vector $v^\si_{\la,\al}$ is
singular in $\hat M_{\si\cdot \la}$  for every $\al \in \Pi^+_\l$. Denote by $M_{\si\cdot \la}$ the $U_q(\g)$-module that is quotient
of $\hat M_{\si\cdot \la}$ by the submodule $\sum_{\al \in \Pi^+_\l}U_q(\g)v_{\la,\al}^\si$.
Let $\varpi$ be the projector $\hat M_{\si\cdot \la}\to  M_{\si\cdot \la}$.
Consider the filtration $(\hat V^{\si\cdot \la}_j)_{j=0}^n$ of $\C^n\tp \hat M_{\si\cdot \la}$ and put
 $V_i^{\si\cdot \la}=(\id\tp \varpi) (\hat V^{\si\cdot \la}_{m_i^\si})$, $i=1,\ldots,k$.

\begin{propn}
\label{levi_filtration}
For all $m\in \bar I^\si_\l$,  $(\id\tp \varpi)(\hat V^{\si\cdot \la}_{m}/\hat V^{\si\cdot \la}_{m-1})=\{0\}$.
The $\U_q(\g)$-modules $(V_i^{\si\cdot \la})_{i=1}^k$ form a filtration of $\C^n\tp M_{\si\cdot \la}$.
As a filtration of $\U_q(\n_-)$-modules, it is independent of $\la\in \frac{1}{\hbar}\c^*_\l\oplus \c^*_\l$
once $\si(\Pi^+_\l)\subset \Pi^+_\g$.
\end{propn}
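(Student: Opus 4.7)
The first assertion is the key step, and once it is established, the second follows by iteration while the third requires only a short structural remark. Fix $m\in \bar I^\si_\l$ and write $m=\si(j)$ with $j\in \bar I_\l$, so that $j-1\prec j$ and $\al:=\al_{j-1}=\ve_{j-1}-\ve_j\in \Pi^+_\l$. Admissibility of $\si$ gives $m':=\si(j-1)<m$, whence $\si(\al)=\ve_{m'}-\ve_m\in R^+_\g$. The plan is to exhibit an element of $\ker(\id\tp\varpi)$ congruent to a nonzero multiple of $w_m\tp v_{\si\cdot\la}$ modulo $\hat V^{\si\cdot\la}_{m-1}$. The natural candidate is $w_{m'}\tp \check{f}_{\si(\al)}v_{\si\cdot\la}$, which has weight $\si\cdot\la+\ve_m$ and whose second tensor factor lies in $\ker\varpi$ by Proposition~\ref{si-singular} together with the definition of $M_{\si\cdot\la}$.

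To relate this candidate to $w_m\tp v_{\si\cdot\la}$, iterate the recursion~(\ref{dyn_root_vectors}) and gather the Cartan coefficients on the right; specializing on $v_{\si\cdot\la}$ gives a Chevalley-monomial expansion
\[
\check{f}_{\si(\al)}v_{\si\cdot\la}=A\,\psi_{m',m}v_{\si\cdot\la}+\sum_{s\neq\id}A_s\,\psi^s_{m',m}v_{\si\cdot\la},\qquad A=\prod_{l=m'+1}^{m-1}[(\si(\la+\rho),\ve_l-\ve_m)]_q.
\]
Proposition~\ref{principal_filtration} then shows that $w_{m'}\tp\psi^s_{m',m}v_{\si\cdot\la}\in \hat V^{\si\cdot\la}_{m-1}$ for each $s\neq\id$, while $w_{m'}\tp\psi_{m',m}v_{\si\cdot\la}$ is congruent to a nonzero $q$-power multiple of $w_m\tp v_{\si\cdot\la}$ modulo $\hat V^{\si\cdot\la}_{m-1}$. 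Collecting these congruences and applying $\id\tp\varpi$ yields
\[
A\cdot(\id\tp\varpi)(w_m\tp v_{\si\cdot\la})\in (\id\tp\varpi)(\hat V^{\si\cdot\la}_{m-1}).
\]
Since $\hat V^{\si\cdot\la}_m/\hat V^{\si\cdot\la}_{m-1}$ is $U_q(\g)$-generated by the image of $w_m\tp v_{\si\cdot\la}$, the first assertion reduces to $A\neq 0$.

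The nonvanishing of $A$ is the principal obstacle. Each factor $[(\si(\la+\rho),\ve_l-\ve_m)]_q$ corresponds to the decomposition $\si(\al)=(\ve_{m'}-\ve_l)+(\ve_l-\ve_m)$ into two positive roots of $\g$, so Lemma~\ref{regular} forces $(\si(\la),\ve_l-\ve_m)\neq 0$ on the regular locus $\frac{1}{\hbar}\c^*_{\l,reg}\oplus\c^*_\l$; the $\rho$-shift modifies the exponent by an integer and the $q$-bracket remains invertible for generic $\la$. Passage to the full parameter set $\frac{1}{\hbar}\c^*_\l\oplus\c^*_\l$ follows by a Zariski-closedness/continuity argument, since all objects depend regularly on $\la$ and the containment holds on a dense subset.

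The second assertion then follows by iteration: $(\id\tp\varpi)(\hat V^{\si\cdot\la}_m)=V^{\si\cdot\la}_i$ whenever $m^\si_i\leq m<m^\si_{i+1}$, so $V^{\si\cdot\la}_k=(\id\tp\varpi)(\C^n\tp\hat M_{\si\cdot\la})=\C^n\tp M_{\si\cdot\la}$, while the nested inclusions are tautological. For the third assertion, the hypothesis $\si(\Pi^+_\l)\subset \Pi^+_\g$ makes each $\si(\al)$ a simple root of $\g$, so $\check{f}_{\si(\al)}=f_{\si(\al)}\in U_q(\n_-)$ carries no Cartan coefficients and $\ker\varpi\cap U_q(\n_-)v_{\si\cdot\la}$ is cut out by the $\la$-independent left ideal $J=\sum_{\al\in\Pi^+_\l}U_q(\n_-)f_{\si(\al)}$. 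Under the identification $M_{\si\cdot\la}\simeq U_q(\n_-)/J$ of $U_q(\n_-)$-modules, the filtration $V^{\si\cdot\la}_i$ transfers to a fixed filtration of $\C^n\tp (U_q(\n_-)/J)$ independent of $\la$.
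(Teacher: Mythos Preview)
Your argument for the first two assertions follows the paper's proof closely: both tensor $w_{m'}$ with the singular vector $\check f_{\si(\al)}v_{\si\cdot\la}$ (which dies under $\varpi$), expand into Chevalley monomials, and invoke Proposition~\ref{principal_filtration} to reduce to a scalar multiple of $w_m\tp v_{\si\cdot\la}$ modulo $\hat V^{\si\cdot\la}_{m-1}$. Your explicit attention to the principal coefficient $A$ and its nonvanishing via Lemma~\ref{regular} is a point the paper passes over with the $\simeq$ notation; the density extension you sketch is plausible but not fully justified, since $\ker\varpi$ itself varies with $\la$ and membership in $(\id\tp\varpi)(\hat V^{\si\cdot\la}_{m-1})$ is not obviously a closed condition.

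There is, however, a genuine gap in your treatment of the third assertion. You correctly observe that in the Levi case $M_{\si\cdot\la}\simeq U_q(\n_-)/J$ as a $U_q(\n_-)$-module with $J$ independent of $\la$, but you then claim that the filtration ``transfers to a fixed filtration of $\C^n\tp(U_q(\n_-)/J)$''. This does not follow: the filtration pieces are generated under the \emph{tensor} action, and $\Delta(f_\al)=1\tp f_\al+f_\al\tp q^{-h_\al}$ places the Cartan factor on the right, where it acts on $M_{\si\cdot\la}$ by a $\la$-dependent scalar. Thus the $U_q(\n_-)$-module structure on $\C^n\tp M_{\si\cdot\la}$ is \emph{not} $\la$-independent under your identification, and neither, a priori, are the submodules it generates. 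The paper's remedy is to switch to the twisted generators $f_\al'=q^{h_\al}f_\al$, spanning an isomorphic subalgebra $U_q(\n_-')\subset U_q(\b_-)$ with the flipped coproduct $\Delta(f_\al')=f_\al'\tp 1+q^{h_\al}\tp f_\al'$; now the Cartan factor hits only $\C^n$, so the $U_q(\n_-')$-action on $\C^n\tp(U_q(\n_-')/J')$ is genuinely $\la$-free. Since $f_\al$ and $f_\al'$ differ by invertible scalars on weight vectors, they generate the same submodules, and the $\la$-independence of the filtration follows.
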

\begin{proof}
For each  $m\in \bar I^\si_\l$ there is a positive integer $l<m$ such that $\al=\ve_l-\ve_m\in \si \Pi^+_\l$.
The vector $v_{\la,\al}^\si$ is singular in $\hat M_{\si\cdot \la}$ and vanishes in $M_{\si\cdot \la}$.
By Proposition \ref{principal_filtration},
$$
w_l\tp v_{\la,\al}^\si\simeq w_l\tp \psi_{lm} v_{\si\cdot \la}\simeq w_m\tp \hat v_{\si\cdot \la} \mod \hat V^{\si\cdot \la}_{m-1}.
$$
Projection to $\C^n\tp M_{\si\cdot \la}$ annihilates $w_l\tp v_{\la,\al}^\si$, hence $(\id\tp \varpi)(w_m\tp \hat v_{\si\cdot \la})\in (\id\tp \varpi)(\hat V^{\si\cdot \la}_{m-1})$. This proves the first and second statements.

Suppose that $\si(\l)$ is a Levi subalgebra, i.e. $\si(\Pi^+_\l)\subset \Pi^+_\g$.
Consider the subalgebra $U_q(\n'_-)$ in $U_q(\b_-)$ generated by $f_\al'=q^{h_\al}f_\al$, $\al\in \Pi^+$
(it is isomorphic to $U_q(\n_-)$). The $U_q(\n'_-)$-module $M_{\si\cdot \la}$ is
isomorphic to the  quotient of $U_q(\n'_-)$ by the left ideal $\sum_{\al \in \si\Pi^+_\l}U_q(\n'_-)f_{\al}$.
This ideal is independent of $\la$, hence $M_{\si\cdot \la}$ are isomorphic for all $\la$.
 With this isomorphism, the representation of $U_q(\n'_-)$ on $\C^n\tp M_{\si\cdot \la}$
is independent of $\la$ since $\Delta(f_\al')=f_\al'\tp 1+q^{h_\al}\tp f_\al'$. On the other hand, $V_{i}^{\si\cdot \la}/V_{i-1}^{\si\cdot \la}\simeq U_q(\b_-)(w_{m_i^\si}\tp v_{\si\cdot \la})=U_q(\n_-')(w_{m_i^\si}\tp v_{\si\cdot \la})$, $i=1,\ldots, k$ (here we identified $w_{m_i^\si}\tp v_{\si\cdot \la}$ with its image in $V_{i}^{\si\cdot \la}/V_{i-1}^{\si\cdot \la}$).
\end{proof}
\noindent
Proposition \ref{levi_filtration} gives an upper estimate $k$ for the degree of the minimal polynomial of $\Q$ on $\C^n\tp M_{\si\cdot \la}$. To make it exact, we must show that
all $V^{\si\cdot \la}_{i+1}/V^{\si\cdot \la}_i\not=\{0\}$. We do it in the next section.
\section{Realization of $\C_\hbar[O]$ in $\End(M_{\si\cdot \la})$}
Until Lemma \ref{vanishing_submodule}, $\la$ is an arbitrary weight from $\frac{1}{\hbar}\h^*\oplus \h^*$.
Define $\hat M_i^\la\subset \C^n\tp \hat M_\la$ to be the $U_q(\g)$-submodule generated by the singular vector $\hat u_i$ of weight $\la+\ve_i$, $i=1,\ldots,n$. The operator $\Q$ restricted to $\hat M_i^\la$ is scalar multiplication by $\hat x_i=q^{2(\la_i-i+1)}$.
For all $\si\in S_n$, the action $\si\colon \la\mapsto \si\cdot \la$ gives rise to the permutation
$\hat x_i\mapsto \hat x_{\si^{-1}(i)}$.

The contribution of principal monomials to the dynamical root vector gives
$$
\check{f}_{\ve_i-\ve_l}v_\la=
\prod_{j=i+1}^{l-1}[\la_j-\la_l + l-j]_q\psi_{il}v_\la+\ldots, \quad \la_i=(\la,\ve_i),
$$
where non-principal terms are omitted.
Applying Proposition \ref{principal_filtration}
we find that  $\hat u_l$ is equal to
$(-1)^{l-1}\hat C_lw_l\tp v_\la$ modulo $\hat V^\la_{l-1}$ with some scalar $\hat C_l$.
\begin{lemma}
For all $l=1,\ldots, n$, $\hat C_l=\prod_{j=1}^{l-1}
[\la_j-\la_l + l-j]_q.
$
\end{lemma}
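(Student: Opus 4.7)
The plan is to substitute the explicit formula for $\hat u_l$ from Corollary \ref{hat u_l} into the congruence $\hat u_l \equiv (-1)^{l-1}\hat C_l\, w_l\tp v_\la \mod \hat V^\la_{l-1}$ and simplify using Proposition \ref{principal_filtration}.  Specifically, the display just above this lemma expresses $\check f_{\ve_i-\ve_l}v_\la$ as $\prod_{j=i+1}^{l-1}[\la_j-\la_l+l-j]_q\,\psi_{il}v_\la$ plus non-principal terms $\psi^s_{il}v_\la$ with $s\neq \id$.  By the second clause of Proposition \ref{principal_filtration} the non-principal contributions $w_i\tp \psi^s_{il}v_\la$ lie in $\hat V^\la_{l-1}$ and drop out; by the first clause, for $i<l$ (so $\dt_{il}=0$),
\[
w_i\tp \psi_{il}v_\la \;\equiv\; (-1)^{l-i}\,q^{\la_l-\la_i+i-l+1}\, w_l\tp v_\la \pmod{\hat V^\la_{l-1}},
\]
while the $i=l$ summand contributes $w_l\tp v_\la$ directly.

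Setting $a_j=\la_j-\la_l+l-j$ and combining the sign $(-q)^{i-1}$, the factor $(-1)^{l-i}$, and $q^{\la_l-\la_i+i-l+1}$, the overall factor of $(-1)^{l-1}$ matches while the power of $q$ collapses to $q^{i-a_i}$ (using $\la_l-\la_i = l-i-a_i$). Collecting everything modulo $\hat V^\la_{l-1}$ yields the explicit expression
\[
\hat C_l \;=\; q^{l-1}\prod_{j=1}^{l-1}[a_j-1]_q \;+\; \sum_{i=1}^{l-1} q^{i-a_i}\prod_{j=1}^{i-1}[a_j-1]_q\,\prod_{j=i+1}^{l-1}[a_j]_q,
\]
with the empty products interpreted as $1$.

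The remaining task is the purely scalar identity $\hat C_l = \prod_{j=1}^{l-1}[a_j]_q$.  This is an immediate telescoping from the elementary $q$-relation $[a]_q = q\,[a-1]_q + q^{1-a}$: substituting this successively for $[a_1]_q,[a_2]_q,\ldots,[a_{l-1}]_q$ inside $\prod_{j=1}^{l-1}[a_j]_q$, the $i$-th expansion peels off exactly the summand $q^{i-a_i}\prod_{j<i}[a_j-1]_q\prod_{j>i}[a_j]_q$, and after $l-1$ steps the surviving ``fully expanded'' factor is $q^{l-1}\prod_{j=1}^{l-1}[a_j-1]_q$.  Equivalently, one can run a one-line induction on $l$ by splitting off $[a_1]_q$.

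The only real obstacle is bookkeeping the signs and the exponent of $q$ coming from Proposition \ref{principal_filtration}; once it is checked that these reduce to the prefactor $q^{i-a_i}$, the telescoping is routine and requires no further input from the representation theory.
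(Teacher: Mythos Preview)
Your proof is correct and follows essentially the same route as the paper. Both reduce, via Proposition \ref{principal_filtration} and Corollary \ref{hat u_l}, to the same scalar expression for $\hat C_l$, and both close with the identity $q[a-1]_q+q^{1-a}=[a]_q$; the paper phrases it as induction on $l$ (splitting off the $i=1$ term and invoking $\hat C_{l-1}$ for the shifted variables), which is exactly the ``one-line induction on $l$ by splitting off $[a_1]_q$'' you mention as equivalent to your telescoping.
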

\begin{proof}
For $l=2$, we have
$$
\hat u_2=-q[\la_1-\la_2]_qw_2\tp v_\la+w_1\tp f_1v_\la
=
-(q[\la_1-\la_2]_q+q^{-\la_1+\la_2})w_2\tp v_\la\mod \hat V^\la_1,
$$
so $\hat C_2=[\la_1-\la_2+1]_q$. For $l\geqslant 2$ we find
$$
\hat C_l=\sum_{i=1}^{l} q^{i-1}q^{\la_l-\la_i+i-l+1-\dt_{il}}
\prod_{j=1}^{i-1}[\la_j-\la_l + l-j-1]_q
\prod_{j=i+1}^{l-1}[\la_j-\la_l + l-j]_q.
$$
Suppose that the lemma is proved for $\hat C_{l-1}$. Then we can write
\be
(-1)^{l-1}\hat C_l&=&
\Bigl(\sum_{i=2}^{l} q^{i-2}q^{\la_l-\la_i+i-l+1-\dt_{il}}\prod_{j=2}^{i-1}
[\la_j-\la_l + l-j-1]_q\prod_{j=i+1}^{l-1}[\la_j-\la_l + l-j]_q\Bigr)
\times
\nn\\
&\times&
q[\la_1-\la_l + l-2]_q+q^{\la_l-\la_1-l+2}\prod_{j=2}^{l-1}[\la_j-\la_l + l-j]_q
\nn\\
&=&
\bigl(q[\la_1-\la_l + l-2]_q
+
q^{\la_l-\la_1-l+2}\bigr)\prod_{j=2}^{l-1}[\la_j-\la_l + l-j]_q.
\nn
\ee
We  applied the induction assumption to the expression in brackets in the top line.
The factor in the brackets is equal to $[\la_1-\la_l + l-1]_q$, so the statement is proved.
\end{proof}
Up to a non-zero factor,
$
\hat C_l=\prod_{j=1}^{l-1}(\hat x_j-\hat x_l),
$
where $\hat x_i$ are the eigenvalues of $\Q$ on $\C^n\tp \hat M_\la$.

\begin{lemma}
\label{M_j in M_i}
For $i<j$, the submodule $\hat M_{j}^\la$ is contained in $\hat M_i^\la$ if and only if $\hat x_i=\hat x_j$.
\end{lemma}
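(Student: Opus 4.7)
The plan is to split the biconditional into its two directions and handle them by different means.

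For the forward direction, I would observe that $\Q = (\pi \tp \id)(\Ru_{21}\Ru)$ is $\U_q(\g)$-invariant and hence acts as a scalar on every cyclic $\U_q(\g)$-module. Since $\hat M_l^\la = \U_q(\g)\hat u_l$ with $\Q \hat u_l = \hat x_l \hat u_l$, the restriction of $\Q$ to $\hat M_l^\la$ equals $\hat x_l$, so any nonzero containment $\hat M_j^\la \subset \hat M_i^\la$ immediately forces $\hat x_j = \hat x_i$.

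For the reverse direction, assume $\hat x_i = \hat x_j$. Using $\hat x_l = q^{2(\la_l - l + 1)}$ this rewrites as $\la_i - \la_j = i - j$, or equivalently as $(\la + \ve_i + \rho, \ve_i - \ve_j) = 1$. By Proposition \ref{singular} (applied with $m = 1$ inside the Verma module $\hat M_{\la + \ve_i}$), the vector $v' = \check{f}_{\ve_i - \ve_j}\,v_{\la + \ve_i}$ is then a nonzero $\U_q(\g)$-singular vector of weight $\la + \ve_j$. Let $\phi_i \colon \hat M_{\la + \ve_i} \to \C^n \tp \hat M_\la$ be the $\U_q(\g)$-homomorphism $v_{\la+\ve_i} \mapsto \hat u_i$, whose image is $\hat M_i^\la$; then $u := \phi_i(v')\in \hat M_i^\la$ is singular of weight $\la + \ve_j$ in $\C^n \tp \hat M_\la$, and by Lemma \ref{conat_n_Verma} together with Corollary \ref{hat u_l} the space of such singular vectors is one-dimensional, so $u = c\,\hat u_j$ for some $c \in \C$. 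Establishing $\hat u_j \in \hat M_i^\la$ reduces to showing $c \ne 0$.

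The non-vanishing of $c$ is the main obstacle, and I would handle it in two cases. For $i = 1$, the identity $\hat u_1 = w_1 \tp v_\la$ and $\hat C_1 = 1$ show that the composition $\hat M_{\la + \ve_1} \stackrel{\phi_1}{\longrightarrow} \hat V_1^\la \simeq \hat M_{\la + \ve_1}$ is a nonzero scalar, hence $\phi_1$ is injective and $u \ne 0$. For $i \ge 2$, I would compare the $w_1$-components in $\C^n \tp \hat M_\la$: that of $\hat u_j$ is $w_1 \tp \check{f}_{\ve_1 - \ve_j}v_\la$, nonzero by Proposition \ref{singular}. Writing $v' = x\,v_{\la+\ve_i}$ with $x = \check{f}_{\ve_i - \ve_j}(\la + \ve_i)$ specialized to an element of $\U_q(\n_-)$, a polynomial in the $f_{\al_k}$ for $k \in [i,j-1]$, and using that $f_{\al_k}w_1 = 0$ for every $k \ge 2$ together with $\Delta(f_{\al_k})(w_1 \tp y) = w_1 \tp f_{\al_k}y$, the $w_1$-component of $u = x\,\hat u_i$ reduces to $w_1 \tp \check{f}_{\ve_i - \ve_j}(\la + \ve_i)\check{f}_{\ve_1 - \ve_i}v_\la$. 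Both this vector and $\check{f}_{\ve_1 - \ve_j}v_\la$ are $\U_q(\g_{2\sms n})$-singular vectors of weight $\la + \ve_j - \ve_1$ in $\hat M_\la$ (the former because the Cartan factors in $\check{f}_{\ve_i - \ve_j}$ are insensitive to shifts by $\ve_1$, so the singularity claim reduces to Proposition \ref{singular} applied inside the subalgebra $\g_{2\sms n}$), and by the uniqueness clause of Proposition \ref{singular} they are proportional; a short leading-term analysis in the PBW basis shows the proportionality constant does not vanish. Hence $c \ne 0$, so $\hat u_j = c^{-1}u \in \hat M_i^\la$ and $\hat M_j^\la = \U_q(\g)\hat u_j \subset \hat M_i^\la$.
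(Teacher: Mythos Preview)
Your argument is correct, but it takes a genuinely different route from the paper's. The paper handles the ``if'' direction by a deformation argument: since $\hat C_j\simeq\prod_{l<j}(\hat x_l-\hat x_j)$ vanishes, one has $\hat u_j\in\hat V^\la_{j-1}$; for \emph{generic} $\la$ subject to $\hat x_i=\hat x_j$ (all other $\hat x_l$ pairwise distinct) the filtration splits as $\hat V^\la_{j-1}=\hat M_1^\la\oplus\cdots\oplus\hat M_{j-1}^\la$, and the eigenvalue of $\Q$ forces $\hat u_j\in\hat M_i^\la$. This yields $\hat u_j\simeq\check f_{\ve_i-\ve_j}\hat u_i$ on a Zariski-dense set of weights, and the identity is then declared to hold for all $\la$ with $\hat x_i=\hat x_j$ by algebraic continuation. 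Your approach bypasses the genericity step entirely: you produce $\phi_i(\check f_{\ve_i-\ve_j}v_{\la+\ve_i})\in\hat M_i^\la$ directly and verify it is a nonzero multiple of $\hat u_j$ by a $w_1$-component computation. What the paper's argument buys is brevity (two lines once the splitting for generic $\la$ is in hand), at the cost of a somewhat implicit passage to the limit; what your argument buys is that it works at the given $\la$ without appeal to deformation, and it makes the proportionality $\hat u_j\propto\check f_{\ve_i-\ve_j}\hat u_i$ explicit.

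Two small remarks on your write-up. First, your $\g_{2\sms n}$-singularity claim for $\check f_{\ve_i-\ve_j}(\la+\ve_i)\check f_{\ve_1-\ve_i}v_\la$ does hold, but only because the hypothesis $(\la+\ve_i+\rho,\ve_i-\ve_j)=1$ together with $(\ve_1,\ve_i-\ve_j)=0$ gives the \emph{full} singularity condition in Proposition~\ref{singular}, not just the automatic $\g_{i+1\sms n}$-singularity; you should say this explicitly, since Proposition~\ref{singular} alone only guarantees annihilation by $e_{\al_l}$ for $l>i$. Second, the ``leading-term analysis'' is unnecessary: once you know the $w_1$-component of $u$ equals $w_1\tp x\,\check f_{\ve_1-\ve_i}v_\la$ with $x=\check f_{\ve_i-\ve_j}(\la+\ve_i)$, nonvanishing follows immediately because $x$ and $\check f_{\ve_1-\ve_i}(\la)$ are nonzero elements of the integral domain $U_q(\n_-)$ and $\hat M_\la$ is free of rank one over it. This both shortens the argument and avoids the detour through uniqueness of $\g_{2\sms n}$-singular vectors.
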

\begin{proof}
"Only if" is obvious. Suppose that $\hat x_i=\hat x_j$. Then the coefficient $\hat C_l$ turns zero
and $\hat u_l\in \hat V^\la_{l-1}$. First suppose that all $\hat x_l$ are pairwise distinct for $l<j$. Then
$\hat V^\la_{l-1}=\hat M_1^\la\oplus \ldots\oplus \hat M_{l-1}^\la$ and $\hat M_j^\la\subset  \hat M_i^\la$.
The vector $u_j$ is singular in $ \hat M_i^\la$, therefore $\hat u_j\simeq\check{f}_{\ve_i-\ve_j}\hat u_i$. This equality is true for
generic $\la$ subject to $\hat x_i=\hat x_j$, hence for all such $\la$.
\end{proof}
Define $\hat W^\la_i=\hat M_1^\la+\ldots+ \hat M_i^\la$, so that $\hat W^\la_i\subset \hat W^\la_j$, $i<j$.
\begin{propn}
\label{two_filtrations}
The submodules $\hat W^\la_i$ and $\hat V^\la_i$ coincide if and only if the eigenvalues $\{\hat x_l\}_{l=1}^i$ are
pairwise distinct.
\end{propn}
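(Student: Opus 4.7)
The inclusion $\hat W^\la_i \subseteq \hat V^\la_i$ holds for all $\la$, since by Corollary~\ref{hat u_l} each $\hat u_l$ lies in $\hat V^\la_l$ and the latter is a $U_q(\g)$-submodule. Hence the task is to decide when the two coincide, and I would handle the two implications separately.

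For the ``if'' direction I would induct on $i$. The case $i=1$ is immediate: $\hat u_1 = w_1\tp v_\la$, so $\hat M_1^\la = U_q(\g)(w_1\tp v_\la) = \hat V^\la_1$. For the inductive step, assume $\hat W^\la_{i-1}=\hat V^\la_{i-1}$ and that $\hat x_1,\ldots,\hat x_i$ are pairwise distinct. By the preceding lemma, $\hat u_i \equiv (-1)^{i-1}\hat C_i\, w_i\tp v_\la \mod \hat V^\la_{i-1}$, and $\hat C_i$ is proportional to $\prod_{j<i}(\hat x_j - \hat x_i)$, hence non-zero. Consequently $w_i\tp v_\la \in \hat M_i^\la + \hat V^\la_{i-1} = \hat M_i^\la + \hat W^\la_{i-1} \subseteq \hat W^\la_i$, and since $\hat V^\la_i = U_q(\g)(w_i\tp v_\la) + \hat V^\la_{i-1}$, this forces $\hat V^\la_i \subseteq \hat W^\la_i$, completing the induction.

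For the ``only if'' direction I would argue the contrapositive. Suppose $\hat x_j = \hat x_l$ for some $j<l\leq i$. Then Lemma~\ref{M_j in M_i} gives $\hat M_l^\la \subseteq \hat M_j^\la$, so in the expression for $\hat W^\la_i$ the summand $\hat M_l^\la$ is redundant and $\hat W^\la_i=\sum_{p\leq i,\, p\neq l}\hat M_p^\la$. Each $\hat M_p^\la$ is generated by a singular vector of weight $\la+\ve_p$ and hence is a quotient of the Verma module $\hat M_{\la+\ve_p}$, giving the character estimate
\[
\mathrm{ch}(\hat W^\la_i) \;\leq\; \sum_{p\leq i,\, p\neq l}\mathrm{ch}(\hat M_{\la+\ve_p}).
\]
On the other hand the filtration $\{\hat V^\la_p\}$ has Verma graded pieces, so $\mathrm{ch}(\hat V^\la_i)=\sum_{p=1}^i \mathrm{ch}(\hat M_{\la+\ve_p})$. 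Evaluating at the weight $\la+\ve_l$ decides the matter: summands with $p>l$ do not contribute, because $\la+\ve_l$ is not a weight of $\hat M_{\la+\ve_p}$ (as $\ve_l-\ve_p$ is a positive root), while the $p=l$ summand contributes exactly $1$ from its highest-weight line. Dropping this contribution yields a strict decrease, so $\hat W^\la_i$ is properly contained in $\hat V^\la_i$.

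The analytic content is concentrated in the preceding lemma identifying the leading coefficient $\hat C_i \propto \prod_{j<i}(\hat x_j-\hat x_i)$, after which the forward direction reduces to a one-line induction. The backward direction is character bookkeeping, and the only subtlety is to isolate a weight --- here the top weight $\la+\ve_l$ of the ``missing'' Verma --- at which the character inequality is strict.
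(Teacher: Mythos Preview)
Your argument is correct and follows the paper's route: the ``if'' direction is the same induction on the non-vanishing of $\hat C_l$, and the ``only if'' direction invokes Lemma~\ref{M_j in M_i} followed by a size comparison. The paper phrases the latter tersely as $\gr\hat W^\la_n\neq\gr\hat V^\la_n$ (appealing to Proposition~\ref{levi_filtration} with $\l=\h$), whereas your character count at the weight $\la+\ve_l$ makes the strict inequality explicit.
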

\begin{proof}
By Proposition \ref{principal_filtration}, $\hat W^\la_l\subset \hat V^\la_l$. If the eigenvalues are distinct, $\hat C_l\not =0$ for all $l$ and then $\hat V^\la_l\subset \hat W^\la_l$.
Otherwise $\hat M_j^\la\subset \hat M_i^\la$ for some $i<j$, by Proposition \ref{M_j in M_i}.
Then the graded modules $\gr \hat W^\la_n$ and $\gr \hat V^\la_n$ are different (recall that the latter is independent of $\la$ as a vector space, by Propostion \ref{levi_filtration} applied to $\l=\h$).
\end{proof}
Suppose that the weight $\la$ satisfies the condition $[(\la+\rho,\al)-1]_q=0$ for $\al\in R^+$
and let $\hat M_{\la-\al}\subset \hat M_\la$ be the submodule generated by  the singular vector $\check{f}_\al v_\la$.
Define the quotient module $M_{\la,\al}=\hat M_\la/\hat M_{\la-\al}$ and let $\varpi_{\al}\colon \hat M_\al \to M_{\la,\al}$ be the projector.
\begin{lemma}
\label{vanishing_submodule}
If $[(\la+\rho,\al)-1]_q=0$ for some $\al=\ve_i-\ve_j\in R^+$, then $(\id\tp \varpi_\al)(\hat M_j^\la)=\{0\}$.
\end{lemma}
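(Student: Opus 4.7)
The plan is to reduce the vanishing to a single generator. Since $\id\tp\varpi_\al$ is a $U_q(\g)$-module map and $\hat M_j^\la=U_q(\g)\hat u_j$, it suffices to show $\hat u_j\in \C^n\tp \hat M_{\la-\al}$, which is the kernel of $\id\tp\varpi_\al$.

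The hypothesis reads $[\la_i-\la_j+j-i-1]_q=0$. In the formula of Corollary \ref{hat u_l},
$$
\hat u_j=\sum_{l=1}^j (-q)^{l-1}\prod_{s=1}^{l-1}[\la_s-\la_j + j-s-1]_q\, w_l\tp \check{f}_{\ve_l-\ve_j}v_\la,
$$
every coefficient with $l>i$ contains the vanishing factor at $s=i$, so those terms drop out. It remains to prove $\check{f}_{\ve_l-\ve_j}v_\la\in \hat M_{\la-\al}$ for $l=1,\ldots,i$. For $l=i$ this is the definition of $\hat M_{\la-\al}$, since $\check{f}_{\ve_i-\ve_j}v_\la=\check{f}_\al v_\la$.

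For $l<i$ I would exhibit a second $U_q(\g_{l+1,n})$-singular vector of weight $\la-(\ve_l-\ve_j)$ inside $\hat M_{\la-\al}$ and invoke the uniqueness clause of Proposition \ref{singular}. Let $\check{f}_{\ve_l-\ve_i}(\la-\al)\in U_q(\n_-)$ denote the specialisation of the dynamical root vector at the weight $\la-\al$. Then the vector $\check{f}_{\ve_l-\ve_i}(\la-\al)\check{f}_\al v_\la$ lies in $\hat M_{\la-\al}$, carries weight $\la-(\ve_l-\ve_j)$, and is $U_q(\g_{l+1,n})$-singular in $\hat M_\la$ by Proposition \ref{singular} applied to the highest-weight submodule generated by $\check{f}_\al v_\la$. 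Uniqueness then identifies it, up to a scalar, with $\check{f}_{\ve_l-\ve_j}v_\la$, which is nonzero; hence so is the scalar, and $\check{f}_{\ve_l-\ve_j}v_\la\in \hat M_{\la-\al}$ as required.

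The main obstacle is the nonvanishing of $\check{f}_{\ve_l-\ve_i}(\la-\al)\check{f}_\al v_\la$, needed to conclude that the proportionality scalar is nonzero. Since $\hat M_\la$ is free of rank one over $U_q(\n_-)$ over $v_\la$, this amounts to $\check{f}_{\ve_l-\ve_i}(\la-\al)\check{f}_\al(\la)\neq 0$ in $U_q(\n_-)$; for generic $q$ the algebra $U_q(\n_-)$ is a domain and both factors are nonzero by Proposition \ref{singular}. Should one wish to avoid this structural input, a more computational alternative is to establish the identity $\check{f}_{\ve_l-\ve_j}v_\la=c\,\check{f}_{\ve_l-\ve_i}(\la-\al)\check{f}_\al v_\la$ directly by induction along the recursion (\ref{dyn_root_vectors}), at the cost of more intricate bookkeeping with Cartan coefficients.
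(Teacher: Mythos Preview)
Your argument is correct and shares its overall skeleton with the paper's: both reduce to showing $\hat u_j\in\C^n\tp\hat M_{\la-\al}$, both dispose of the summands with $l>i$ by noting that the scalar coefficient $\prod_{s=1}^{l-1}[\la_s-\la_j+j-s-1]_q$ contains the vanishing factor at $s=i$, and both then prove $\check f_{\ve_l-\ve_j}v_\la\in\hat M_{\la-\al}$ for $l\leqslant i$. The difference lies only in this last step. The paper works directly with the recursion (\ref{dyn_root_vectors}): it observes that $\check f_{\ve_{i-1}-\ve_j}v_\la=[(\la+\rho,\al)]_q\,f_{\al_{i-1}}\check f_\al v_\la$ because the second recursion term carries the Cartan factor $[h_\al+(\rho,\al)-1]_q$ which is killed by $v_\la$, and then descends to $l<i-1$ by noting that the further generators $f_{\al_m}$, $m<i-1$, commute with $\check f_\al$. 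Your route is instead to manufacture a second $U_q(\g_{l+1,n})$-singular vector of the right weight inside $\hat M_{\la-\al}$ and invoke the uniqueness clause of Proposition~\ref{singular}. This is more conceptual and avoids tracking monomials, at the price of an extra structural input: you need $U_q(\n_-)$ to be a domain so that $\hat M_{\la-\al}$ is genuinely a Verma module and Proposition~\ref{singular} applies to it. The paper's computation is self-contained in this respect. Either approach is perfectly adequate here.
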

\begin{proof}
Let us prove that $\hat u_j \in \C^n\tp \hat M_{\la-\al}\subset \C^n\tp\hat M_\la$. This is so if $i=1$ since $\hat u_j =w_1\tp \check{f}_{\al}v_\la$.
If $i>1$, the definition (\ref{dyn_root_vectors})
implies $\check{f}_{\ve_{i-1}-\ve_j}v_\la=[(\la+\rho,\al)]_q f_{\al_{i-1}}\check{f}_\al v_\la\in \hat M_{\la-\al}$. Proceeding by descending induction on $l$ one can check  that
$\check{f}_{\ve_{l}-\ve_j}v_\la \in \hat M_{\la-\al}$ for all $l\leqslant i$.
Indeed, all monomials constituting $\check{f}_{\ve_{l}-\ve_j}$ contain either the factor $f_{\al_{i-1}}f_\al $ or
$f_\al f_{\al_{i-1}}$, by (\ref{dyn_root_vectors}). The latter enters with the factor $[h_{\al} + (\rho,\al)-1]_q$, which can be pushed to the right and killed by $v_\la$. The vector $\check{f}_{\ve_{l}-\ve_j}$ is obtained from $\check{f}_{\ve_{i-1}-\ve_j}$ via generalized commutators
with $f_{\al_{m}}$, $m<i-1$, which commute with $\check{f}_{\al}$. This implies
$\hat u_j \in \C^n\tp \hat M_{\la-\al}$ and
$(\id\tp \varpi_\al)(\hat u_j)=0$, as required.
\end{proof}
\begin{corollary}
Let  $\la\in \frac{1}{\hbar}{\c^*_\l}$ and $\si\in S_n/S_{\nb}$. Then for any $j \in \bar I^\si_\l$ the
submodule $\hat M_j^\la$ is annihilated by the projection
$\id \tp \varpi\colon \C^n\tp \hat M_{\si\cdot \la}\to \C^n\tp  M_{\si\cdot \la}$.
\end{corollary}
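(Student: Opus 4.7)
The plan is to combine three results already established: the existence of a root of the form $\ve_l-\ve_j$ in $\si(\Pi^+_\l)$ (from the proof of Proposition \ref{levi_filtration}), the vanishing condition $[(\si\cdot\la+\rho,\al)-1]_q=0$ on this root (from the proof of Proposition \ref{si-singular}), and finally Lemma \ref{vanishing_submodule} applied at the shifted weight $\si\cdot\la$.

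First I would pick $j\in \bar I_\l^\si$ and invoke the argument in Proposition \ref{levi_filtration}: because $j$ does not label a highest weight of an irreducible $\si(\l)$-summand of $\C^n$, there exists $l<j$ with $\al:=\ve_l-\ve_j\in \si(\Pi^+_\l)$. Write $\al=\si(\bt)$ with $\bt\in\Pi^+_\l$; since $\bt$ is a simple root of $\l$, one has $\bt=\ve_p-\ve_{p+1}$ with $p\prec p+1$, so $(\la,\bt)=0$ and $(\rho,\bt)=1$.

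Next I would verify the hypothesis of Lemma \ref{vanishing_submodule} for the weight $\si\cdot\la$ and the positive root $\al$. Exactly as in the proof of Proposition \ref{si-singular}, the Weyl-invariance of $(\cdot,\cdot)$ and the definition of the shifted action give
\[
(\si\cdot\la+\rho,\al)=\bigl(\si(\la+\rho),\si(\bt)\bigr)=(\la+\rho,\bt)=1,
\]
so $[(\si\cdot\la+\rho,\al)-1]_q=0$. Lemma \ref{vanishing_submodule}, applied with $\la$ replaced by $\si\cdot\la$ and the root $\al=\ve_l-\ve_j$, then yields $(\id\tp\varpi_\al)(\hat M_j^{\si\cdot\la})=\{0\}$, where $\varpi_\al\colon \hat M_{\si\cdot\la}\to M_{\si\cdot\la,\al}$ is the projection onto the quotient by the submodule generated by $\check{f}_\al v_{\si\cdot\la}=v_{\la,\bt}^\si$.

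Finally I would observe that $\varpi$ factors through $\varpi_\al$. Indeed, $M_{\si\cdot\la}$ is by definition the quotient of $\hat M_{\si\cdot\la}$ by $\sum_{\gm\in\Pi^+_\l}U_q(\g)\,v_{\la,\gm}^\si$, and this sum contains $U_q(\g)\,v_{\la,\bt}^\si=U_q(\g)\,\check{f}_\al v_{\si\cdot\la}=\ker\varpi_\al$. Hence there is a further projection $\tilde\varpi\colon M_{\si\cdot\la,\al}\to M_{\si\cdot\la}$ with $\varpi=\tilde\varpi\circ\varpi_\al$, so that
\[
(\id\tp\varpi)(\hat M_j^{\si\cdot\la})=(\id\tp\tilde\varpi)\bigl((\id\tp\varpi_\al)(\hat M_j^{\si\cdot\la})\bigr)=\{0\}.
\]
This is essentially a bookkeeping deduction; there is no real obstacle, only the point of care being the identification $\al\in\si(\Pi^+_\l)$ from Proposition \ref{levi_filtration} together with the matching between Lemma \ref{vanishing_submodule} (which requires $\al=\ve_i-\ve_j$ with the second index equal to the label of the singular vector $\hat u_j$) and the index $j\in \bar I^\si_\l$ chosen in the statement.
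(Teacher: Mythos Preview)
Your argument is correct and follows essentially the same route as the paper's proof: pick $j\in\bar I^\si_\l$, find $\al=\ve_i-\ve_j\in\si(\Pi^+_\l)$, check the singularity condition for $\check f_\al v_{\si\cdot\la}$, and apply Lemma~\ref{vanishing_submodule}. You spell out more carefully than the paper does both the verification $(\si\cdot\la+\rho,\al)=1$ and the factorization $\varpi=\tilde\varpi\circ\varpi_\al$, which the paper leaves implicit.
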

\begin{proof}
Suppose that $j\in  \bar I^\si_\l$. There exists $i\in [1,n]$ such that $\al=\ve_i-\ve_j\in \si(\Pi^+_\l)$ and
$\check{f}_\al v_{\si\cdot \la}$ is singular in $\hat M_{\si\cdot \la}$. As follows from Lemma \ref{vanishing_submodule}, the submodule $\hat M_j^\la$ is annihilated
by the projection $\C^n\tp \hat M_{\si\cdot \la}\to \C^n\tp  M_{\si\cdot \la}$.
\end{proof}
\noindent
Now we turn to the submodules $\hat M_j^\la$ that survive under $\id \tp \varpi$; they are of  $j\in  I^\si_\l$.
\begin{lemma}
\label{u_reduction}
Suppose that  $\al=\ve_i-\ve_j$ and $\bt=\ve_l-\ve_m$  are such that $l\leqslant i<j< m$.
Suppose that $[(\la+\rho,\al)-1]_q=0$.
Then  the vector $\varpi_{\al}(\check{f}_{\bt}v_\la) \in M_{\la,\al}$ has a simple divisor $\hat x_j-\hat x_m$.
\end{lemma}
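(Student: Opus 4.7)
The claim of a simple divisor $\hat x_j-\hat x_m$ is, by polynomial dependence on $q^{\la_r}$, equivalent to showing $\check{f}_\bt v_\la\in \hat M_{\la-\al}$ whenever the extra equation $(\la+\rho,\ve_j-\ve_m)=0$ holds together with the standing hypothesis $(\la+\rho,\al)-1=0$. Under both conditions one has $(\la+\rho,\ve_i-\ve_m)=1$, so Proposition \ref{singular} gives that $\check{f}_{\ve_i-\ve_m}v_\la$ is also a singular vector in $\hat M_\la$. I plan to proceed by induction on $i-l\geq 0$, using the defining recursion (\ref{dyn_root_vectors}) repeatedly.

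For the base case $l=i$, I iterate (\ref{dyn_root_vectors}) $j-i$ times on $\check{f}_{\ve_i-\ve_m}$, peeling off the simple roots $\al_i,\al_{i+1},\dots,\al_{j-1}$ in order. The unique branch choosing the first summand at each stage produces a product whose terminal coefficient is $[(\la+\rho,\ve_j-\ve_m)]_q$, which vanishes under the extra hypothesis. Every other branch selects the second summand at some stage; the accumulated prefix $f_{\al_i}\cdots f_{\al_{i+k-1}}$ can then be recombined, by a reversed application of the recursion, into a $\check{f}_{\ve_i-\ve_{i+k}}$-type factor acting to the left of the residual $\check{f}_{\ve_{i+k}-\ve_m}(\la-\al_{i+k-1})f_{\al_{i+k-1}}v_\la$; invoking the singularity of $\check{f}_\al v_\la$ then places each such term in $\hat M_{\la-\al}=U_q(\n_-)\check{f}_\al v_\la$.

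For the inductive step $l<i$, the recursion gives
\begin{equation*}
\check{f}_{\ve_l-\ve_m}v_\la = A_l\, f_{\al_l}\check{f}_{\ve_{l+1}-\ve_m}v_\la \,-\, B_l\,\check{f}_{\ve_{l+1}-\ve_m}(\la-\al_l)f_{\al_l}v_\la.
\end{equation*}
The inductive hypothesis at $l+1\leq i$ puts $\check{f}_{\ve_{l+1}-\ve_m}v_\la\in\hat M_{\la-\al}$, and $f_{\al_l}$-stability of this submodule retains the first summand there. For the second summand, when $l\leq i-2$ both hypotheses are preserved under the weight shift $\la\mapsto\la-\al_l$ since $(\al_l,\al)=0=(\al_l,\ve_j-\ve_m)$, so the inductive statement at the shifted weight furnishes the required containment after transferring back through the identification $v_{\la-\al_l}\leftrightarrow f_{\al_l}v_\la$. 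The case $l=i-1$, where the shift alters $(\la+\rho,\al)$ from $1$ to $2$ and so spoils the direct application, is the main obstacle; I would dispatch it by commuting $f_{\al_{i-1}}$ past $\check{f}_{\ve_i-\ve_m}(\la-\al_{i-1})$ with the help of the $q$-Serre relations and re-invoking the base case on the rearranged expression, checking that all commutator corrections remain in $\hat M_{\la-\al}$.
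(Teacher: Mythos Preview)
Your strategy is genuinely different from the paper's, but as written it has real gaps.

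\textbf{Simplicity is never addressed.} Your opening reduction only covers divisibility: showing $\check f_\bt v_\la\in\hat M_{\la-\al}$ at $\hat x_j=\hat x_m$ yields the factor $\hat x_j-\hat x_m$, but says nothing about its multiplicity. The paper obtains simplicity from the fact that $\hat C_m=\prod_{r<m}(\hat x_m-\hat x_r)$, the leading coefficient of $(\id\tp\varpi_\al)(\hat u_m)$ along $w_m\tp v_\la$, has only a simple zero at $\hat x_j=\hat x_m$.

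\textbf{The base case is not established.} After choosing the second summand of (\ref{dyn_root_vectors}) at stage $k$, the factor $f_{\al_{i+k-1}}$ lands to the \emph{right} of $\check f_{\ve_{i+k}-\ve_m}$, so it is not available to the left prefix $f_{\al_i}\cdots f_{\al_{i+k-2}}$ for any ``recombination'' into $\check f_{\ve_i-\ve_{i+k}}$. Nor is it clear how ``invoking the singularity of $\check f_\al v_\la$'' forces such a term into $U_q(\n_-)\check f_\al v_\la$. (Incidentally, the base case does follow quickly from Proposition~\ref{singular}: under both hypotheses $(\la-\al+\rho,\ve_j-\ve_m)=1$, so the Verma submodule $\hat M_{\la-\al}$ contains its own singular vector of weight $\la-(\ve_i-\ve_m)$; uniqueness then forces $\check f_{\ve_i-\ve_m}v_\la\in\hat M_{\la-\al}$. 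But that is not the argument you wrote.)

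\textbf{The step $l=i-1$ is left open.} You acknowledge this yourself. ``Commuting $f_{\al_{i-1}}$ past $\check f_{\ve_i-\ve_m}(\la-\al_{i-1})$ with $q$-Serre relations'' is not a proof: the commutator corrections are neither computed nor shown to lie in $\hat M_{\la-\al}$.

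By contrast, the paper bypasses all recursion-chasing. It works inside $\C^n\tp\hat M_\la$ with the singular vector $\hat u_m$ of Corollary~\ref{hat u_l}. Lemma~\ref{M_j in M_i} and Lemma~\ref{vanishing_submodule} give $(\id\tp\varpi_\al)(\hat u_m)=0$ at $\hat x_j=\hat x_m$. In the explicit expansion of $\hat u_m$, the terms with tensor leg $w_l$ for $l>i$ already carry the scalar factor $(\hat x_i-\hat x_mq^2)=q^2(\hat x_j-\hat x_m)$; linear independence of the $w_l$ then forces each remaining component $\varpi_\al(\check f_{\ve_l-\ve_m}v_\la)$, $l\leqslant i$, to be divisible by $\hat x_j-\hat x_m$. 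Simplicity propagates down from $l=1$ via (\ref{dyn_root_vectors}) and the simple zero of $\hat C_m$. This tensor-product argument handles all $l\leqslant i$ simultaneously and needs no special treatment of $l=i-1$.
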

\begin{proof}
 We have the only condition on $\la$, which translates to the $\Q$-eigenvalues
as $\hat x_i=\hat x_j q^2$.  For almost all such $\la$ the coefficient
$\hat C_m\simeq \prod_{r=1}^{m-1}(\hat x_m-\hat x_r)$ is not zero, therefore
$(\id\tp \varpi_{\al})(\hat u_m)\in \C^n\tp M_{\la,\al}$ is not zero.
By Lemma \ref{M_j in M_i} we have $\hat M_m^\la\subset \hat M_j^\la$ at $\hat x_m=\hat x_j$.
Then $(\id \tp\varpi_{\al})(\hat M_m^\la)\subset (\id \tp\varpi_{\al})(\hat M_j^\la)=\{0\}$ and $(\id\tp \varpi_{\al})(\hat u_m)$
is divisible by $\hat x_m-\hat x_j$. The degree of this divisor is $1$, as it is simple in $\hat C_m$.
By Corollary \ref{hat u_l}, we can write
$$
\hat u_m=\sum_{l=1}^m (-q)^{l-1}\prod_{s=1}^{l-1}(\hat x_s-\hat x_mq^2)c_{ms}\>w_l\tp \check{f}_{\ve_l-\ve_m}v_\la, \quad \mbox{where all } c_{ms}\not=0.
$$
The part of the sum corresponding to $l>i$ is divisible by $\hat x_j-\hat x_m=\hat x_iq^{-2}-\hat x_m$.
Retaining the terms with $l\leqslant i$ we write
$
\hat u_m=\sum_{l=1}^i (-q)^{l-1}\prod_{s=1}^{l-1}(\hat x_s-\hat x_mq^2)c_{ms}\>w_l\tp \check{f}_{\ve_l-\ve_m}v_\la+
\ldots
$
Hence the vectors $\varpi_{\al}(\check{f}_{\ve_l-\ve_m}v_\la)$ are divisible by $\hat x_m-\hat x_j$ for all $l=1,\ldots,i$.
Clearly the degree of $\hat x_m-\hat x_j$ in $\varpi_{\al}(\check{f}_{\ve_1-\ve_m}v_\la)$ is $1$ since $\varpi_{\al}(\check{f}_{\ve_1-\ve_m}v_\la)$
generates the other coefficients in $(\id\tp \varpi_{\al})(\hat u_m)$. By (\ref{dyn_root_vectors}), $\hat x_m-\hat x_j$ is a divisor of degree $1$
in
$\varpi_{\al}(\check{f}_{\ve_l-\ve_m}v_\la)$ for all $l=1,\ldots, i$.
\end{proof}

Assuming $i=1,\ldots, k$ define the submodules $M_i^\la$, $W^\la_i$, and $V^\la_i$ in $\C^n\tp M_\la$  to be the images of $\hat M_{m_i}^\la$, $\hat W^\la_{m_i}$, and $\hat V^\la_{m_i}$  under the projection $\C^n\tp \hat M_\la\to \C^n\tp M_\la$. Define $x_i=\hat x_{m_i}$ to be the  eigenvalues of $\Q$ on $\C^n\tp M_\la$.
Put $x_i^\si=\hat x_{m_i^\si}$ and $M_i^{\si\cdot \la}\subset \C^n\tp  M_{\si\cdot \la}$
to be the image of $\hat M_{m_i^\si}^{\si\cdot \la}\subset \C^n\tp \hat  M_{\si\cdot \la}$.
\begin{propn}
The $U_q(\g)$-module  $\C^n\tp M_\la$ splits into the direct sum
$M_1^{\si\cdot \la}\op\ldots \op M_k^{\si\cdot \la}$ if and only if the eigenvalues $\{x_i\}_{i=1}^k$ are pairwise distinct.
\end{propn}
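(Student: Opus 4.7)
The plan is to use Proposition \ref{levi_filtration} to extract a degree-$k$ annihilating polynomial for the invariant operator $\Q$ on $\C^n\tp M_{\si\cdot\la}$, compare the resulting eigenspace decomposition with the submodules $M_i^{\si\cdot\la}$, and handle accidental coincidences among the $\hat x_j$ using Lemma \ref{u_reduction}.

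By Proposition \ref{levi_filtration}, each graded quotient $V_i^{\si\cdot\la}/V_{i-1}^{\si\cdot\la}$ is a highest weight $U_q(\g)$-module on which $\Q$ acts as the scalar $x_i^\si$. Consequently $\prod_{i=1}^k(\Q-x_i^\si)=0$ on $\C^n\tp M_{\si\cdot\la}$, and since $\{x_i^\si\}$ is a permutation of $\{x_i\}$, this equals $\prod_i(\Q-x_i)$. For the ``if'' direction, assume the $x_i$ are pairwise distinct: the minimal polynomial of $\Q$ then has simple roots, $\Q$ is semisimple, and $\C^n\tp M_{\si\cdot\la}=E_1\op\cdots\op E_k$ decomposes into its $\Q$-eigenspaces. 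Each $M_i^{\si\cdot\la}$ is $\Q$-invariant with eigenvalue $x_i^\si$, so $M_i^{\si\cdot\la}\subset E_i$ after matching indices. Equality follows if I can show that $M_i^{\si\cdot\la}$ surjects onto $V_i^{\si\cdot\la}/V_{i-1}^{\si\cdot\la}$ for each $i$, induction on the filtration then yielding $\sum_{j\leq i}M_j^{\si\cdot\la}=V_i^{\si\cdot\la}$. By Proposition \ref{principal_filtration} and the explicit formula for $\hat C_l$, the image $(\id\tp\varpi)(\hat u_{m_i^\si})$ modulo $V_{i-1}^{\si\cdot\la}$ is a scalar multiple of $\hat C_{m_i^\si}\,w_{m_i^\si}\tp[v_{\si\cdot\la}]$; the factors of $\hat C_{m_i^\si}$ indexed by $j\in I_\l^\si$ are proportional to $x_{i'}^\si-x_i^\si$ and are nonzero by distinctness, while any vanishing factor from $j\in\bar I_\l^\si$ with $\hat x_j=\hat x_{m_i^\si}$ is cancelled by a matching zero $\hat x_j-\hat x_{m_i^\si}$ inside the dynamical vectors $\varpi(\check f_{\ve_l-\ve_{m_i^\si}}v_{\si\cdot\la})$ entering $\hat u_{m_i^\si}$, provided by Lemma \ref{u_reduction}. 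After cancellation the projection survives, the $M_i^{\si\cdot\la}$ exhaust $\C^n\tp M_{\si\cdot\la}$, and directness comes from distinctness of the eigenvalues.

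For the converse, suppose $x_i=x_j$ for some $i<j$. Then $\hat x_{m_{i'}^\si}=\hat x_{m_{j'}^\si}$ for the matching pair of indices in $I_\l^\si$, which after swapping we may assume satisfy $m_{i'}^\si<m_{j'}^\si$. Lemma \ref{M_j in M_i} applied to $\hat M_{\si\cdot\la}$ gives $\hat M_{m_{j'}^\si}^{\si\cdot\la}\subset\hat M_{m_{i'}^\si}^{\si\cdot\la}$, and projecting by $\id\tp\varpi$ yields $M_{j'}^{\si\cdot\la}\subset M_{i'}^{\si\cdot\la}$. Since $m_{i'}^\si,m_{j'}^\si\in I_\l^\si$, neither submodule is killed by the projection (the corollary following Lemma \ref{vanishing_submodule} applies only to $\bar I_\l^\si$-indices), and nonvanishing of $M_{j'}^{\si\cdot\la}$ together with the strict inclusion obstructs directness of the sum $\sum_l M_l^{\si\cdot\la}$.

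The principal difficulty is in the ``if'' direction: to establish that $(\id\tp\varpi)(\hat u_{m_i^\si})$ remains nonzero modulo $V_{i-1}^{\si\cdot\la}$ even at those $\la$ where $\hat C_{m_i^\si}$ picks up accidental zeros from $\bar I_\l^\si$-indices. Lemma \ref{u_reduction} is the essential technical device that transfers such vanishing into the projections of the dynamical root vectors, enabling the cancellation that keeps $M_i^{\si\cdot\la}$ large enough to fill the corresponding $\Q$-eigenspace.
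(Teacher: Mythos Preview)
Your approach is essentially the same as the paper's: both rely on the filtration $V_i^{\si\cdot\la}$, the coefficient $\hat C_{m_i^\si}$, and Lemma \ref{u_reduction} to cancel the ``bad'' factors coming from indices in $\bar I_\l^\si$. The paper packages the cancellation slightly more cleanly by defining the renormalized singular vector $u_i^\si=\frac{1}{\bar C_i^\si}(\id\tp\varpi)(\hat u_{m_i^\si})$, where $\bar C_i^\si=\hat C_{m_i^\si}/C_i^\si$ with $C_i^\si=\prod_{j<i}(x_i^\si-x_j^\si)$, and observes that Lemma \ref{u_reduction} makes $u_i^\si$ regular in $\la$; then $u_i^\si\equiv C_i^\si\,w_{m_i^\si}\tp v_\la\bmod V_{i-1}^{\si\cdot\la}$ gives the criterion $V_i^{\si\cdot\la}=W_i^{\si\cdot\la}=M_1^{\si\cdot\la}\op\cdots\op M_i^{\si\cdot\la}$ iff all $C_j^\si\neq 0$. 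Your eigenspace decomposition via $\prod_i(\Q-x_i)=0$ is a correct and natural way to phrase directness, but it amounts to the same mechanism.

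One small gap in your ``only if'' direction: you assert that $M_{j'}^{\si\cdot\la}\neq 0$ because the corollary to Lemma \ref{vanishing_submodule} ``applies only to $\bar I_\l^\si$-indices'', but that is not a proof of nonvanishing, only the absence of a particular obstruction. The paper's renormalization handles this implicitly: since $u_{j'}^\si$ is regular and its image modulo $V_{j'-1}^{\si\cdot\la}$ is $C_{j'}^\si\,w_{m_{j'}^\si}\tp v_\la$, when $C_{j'}^\si=0$ one gets $u_{j'}^\si\in V_{j'-1}^{\si\cdot\la}$, hence $W_k^{\si\cdot\la}\subsetneq V_k^{\si\cdot\la}=\C^n\tp M_{\si\cdot\la}$, and the sum cannot be all of $\C^n\tp M_{\si\cdot\la}$. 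This avoids the need to argue nonvanishing of $M_{j'}^{\si\cdot\la}$ directly.
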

\begin{proof}
Put $C_i^\si=\prod_{j=1}^{i-1}(x_i^\si-x_j^\si)$ and
$\bar C_i^\si =\hat C_{m_i^\si}/C_i^\si\simeq\prod_{j\in [i,j)\cap  \bar I^\si_\l}(\hat x_{m^\si_i}-\hat x_j)$.
Define $u_i^\si=\frac{1}{\bar C_i^\si}(\id \tp \varpi)(\hat u_{m^\si_i})\in \C^n\tp M_{\si\cdot \la}$.
The module $M_i^{\si\cdot \la}$ is generated by the singular vector $u_i^\si$, which is a regular function of $\{x_{j}^\si\}$,
by Lemma \ref{u_reduction}. We have  $u_i^\si= C_i^\si w_{m^\si_i}\tp v_\la \mod V_{i-1}^{\si\cdot \la}$,
Therefore $V_i^{\si\cdot \la}=W_i^{\si\cdot \la}=M_1^{\si\cdot \la}\op\ldots \op M_i^{\si\cdot \la}$ if and only if $C_j^\si \not =0$ for all $j\in[1,i]$.
This immediately implies the assertion.
\end{proof}

Choosing $\la$ from $\frac{1}{\hbar}\c^*_{\l,reg}$ splits the set of eigenvalues $\{\hat x_i=q^{2(\la_i-i+1)}\}_{i=1}^n$ of $\Q$ to $k$ strings
$$
(x_1, x_1q^{-2}, \ldots, x_1q^{-2(n_1-1)}, \ldots,
x_k, x_kq^{-2}, \ldots,  x_kq^{-2(n_k-1)}),
$$
where $\hat x_i, \hat x_j$ enter a  string if and only if $i \preceq j$.
The lowest term in each string
is $x_i=q^{2(\la_{m_i}-m_i+1)}$, $i=1,\ldots,k$. They are exactly
 the eigenvalues of $\Q$  that survive in the projection $\C^n\tp \hat M_\la \to \C^n\tp M_\la$.
The matrix $\Q$ has the same eigenvalues on $\C^n\tp M_{\si\cdot \la}$ for all $\si$.
They are exactly the eigenvalues that survive in the projection $\C^n\tp \hat M_{\si\cdot \la} \to \C^n\tp M_{\si\cdot \la}$. The permutation $\si\in S_n$ induces a permutation
$(x_1,\ldots, x_k)\mapsto (x^\si_1,\ldots,x^\si_k)$.

At this point we turn to deformations and regard  $U_q(\g)$  as a $\C[\![\hbar]\!]$-subalgebra
of $U_\hbar(\g)$. Correspondingly, $U_q(\g)$-modules and their quotients are extended over  $\C[\![\hbar]\!]$
to become $U_\hbar(\g)$-modules.
The standard root vectors $f_\al\in U_\hbar(\n_-)$, $\al \in R^+$, generate a PBW-basis in $U_\hbar(\n_-)$, \cite{CP}. This basis establishes a $U_\hbar(\h)$-linear isomorphism $U_\hbar(\n_-)\simeq U(\n_-)\tp \C[\![\hbar]\!]$.
The root vectors $f_\al\in U_\hbar(\n_-)$, $\al \in \Pi^+$ are deformations of
their classical counterparts.
\begin{lemma}
\label{dynamical_roots_deformed}
Suppose that $\al \in \Pi^+_\l$ and $\si\in S_n/S_{\nb}$.
For any $\la\in \frac{1}{\hbar}\c^*_{\l,reg}$, the specialization $\check{f}_{\si(\al)}(\si\cdot \la)$ is a deformation of
$f_{\si(\al)}$,  upon a proper rescaling.
\end{lemma}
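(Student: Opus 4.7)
The plan is to induct on the height $h$ of the positive root $\si(\al) \in R^+_\g$. For the base case $h=1$, $\si(\al)\in \Pi^+_\g$ and $\check{f}_{\si(\al)} = f_{\si(\al)}$ by definition of the dynamical root vectors, so no rescaling is needed.

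For the inductive step, I would write $\si(\al) = \ve_p - \ve_r$ and decompose $\si(\al) = \al_p + \bt$ with $\bt = \ve_{p+1} - \ve_r$. Setting $\mu = \si\cdot\la$ and applying the recursion (\ref{dyn_root_vectors}) inside the Verma module $\hat M_\mu$ (which is free over $U_q(\n_-)$), one obtains in $U_q(\n_-)$ the identity
\begin{equation*}
\check{f}_{\si(\al)}(\mu) \;=\; A\, f_{\al_p}\, \check{f}_\bt(\mu) \;-\; B\, \check{f}_\bt(\mu - \al_p)\, f_{\al_p}
\end{equation*}
with $A = [(\mu+\rho,\bt)]_q$ and $B = [(\mu+\rho,\bt)-1]_q$. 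The critical input is Lemma \ref{regular}, applied to the decomposition $\si(\al) = \al_p + \bt$ into positive roots: since $\la \in \frac{1}{\hbar}\c^*_{\l,reg}$, one has $(\si\la,\bt) \neq 0$, whence $(\mu+\rho,\bt) = c/\hbar + O(1)$ with $c \neq 0$. With $q = e^\hbar$ and $[z]_q = \sinh(\hbar z)/\sinh(\hbar)$, both $A$ and $B$ acquire the same leading term $\sinh(c)/\hbar$, so $A - B = O(1)$. Moreover, the shift $\mu \mapsto \mu - \al_p$ alters each Cartan evaluation inside $\check{f}_\bt$ only by a factor $1 + O(\hbar)$, so $\check{f}_\bt(\mu - \al_p) - \check{f}_\bt(\mu)$ is of relative order $\hbar$. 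Regrouping yields
\begin{equation*}
\check{f}_{\si(\al)}(\mu) \;=\; A\,[f_{\al_p},\,\check{f}_\bt(\mu)] \;+\; O(1)\cdot \check{f}_\bt(\mu)\,f_{\al_p}.
\end{equation*}

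Next I would apply the induction hypothesis to $\check{f}_\bt(\mu)$: its further recursion involves only ``tail'' sub-roots $\gm = \al_{p+j} + \cdots + \al_{r-1}$ of $\si(\al)$, each of which enters a decomposition $\si(\al) = (\al_p + \cdots + \al_{p+j-1}) + \gm$ into two positive roots, so Lemma \ref{regular} guarantees $(\si\la,\gm) \neq 0$ at every depth. Hence $\check{f}_\bt(\mu) = N_\bt(f_\bt + \hbar Y)$ for a nonzero scalar $N_\bt$ and some $Y \in U(\n_-)[\![\hbar]\!]$ of weight $-\bt$. Using the classical relation $[f_{\al_p}, f_\bt] = \pm f_{\si(\al)}$ in $U(\n_-)$, the leading contribution becomes $\pm A\, N_\bt\, f_{\si(\al)}$ of order $N_\bt/\hbar$, and the remaining terms are of order $N_\bt = \hbar\cdot N_\bt/\hbar$. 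Setting $N_{\si(\al)} = \pm A\,N_\bt$, the quotient $\check{f}_{\si(\al)}(\mu)/N_{\si(\al)}$ equals $f_{\si(\al)} + O(\hbar)$ in $U(\n_-)[\![\hbar]\!]$, which is the required deformation after rescaling.

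The main obstacle will be the careful bookkeeping of $\hbar$-orders through the induction. The inductive hypothesis must ensure that the subleading correction to $\check{f}_\bt(\mu)/N_\bt$ is genuinely $O(\hbar)$, not merely finite, so that multiplication by the $\hbar^{-1}$-singular prefactor $A$ recovers the classical limit $f_{\si(\al)}$ with a bona fide $O(\hbar)$ remainder rather than an $O(1)$ error. A secondary subtlety is harmonizing the three sources of subleading contribution, namely the Cartan shift $\check{f}_\bt(\mu-\al_p) - \check{f}_\bt(\mu)$, the difference $A - B$, and the inductive $\hbar Y$ term, so that they assemble into a single remainder of the expected order $\hbar\cdot N_{\si(\al)}$.
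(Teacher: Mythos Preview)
Your argument is correct and follows essentially the same route as the paper's proof: both handle the simple-root case trivially, both invoke Lemma \ref{regular} to ensure that every Cartan coefficient appearing in the recursion (\ref{dyn_root_vectors}) has a nonzero $\hbar^{-1}$-leading term, and both conclude that after an overall rescaling the recursive ``modified commutators'' reduce to ordinary commutators modulo $O(\hbar)$. The paper compresses your inductive bookkeeping into a single sentence, simply multiplying through by $(q-q^{-1})^{j-i-1}$ and asserting that the resulting expression is a deformation of the nested commutator defining $f_{\si(\al)}$; your version spells out the $\hbar$-orders and the regrouping $A[f_{\al_p},\check f_\bt(\mu)] + O(1)\cdot\check f_\bt(\mu)f_{\al_p}$ explicitly. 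One cosmetic point: your ``induction hypothesis'' is not literally the statement of the lemma applied to $\bt$ (since $\bt$ need not be of the form $\si'(\al')$ for any $\al'\in\Pi^+_\l$), but you correctly note that what is actually needed---nonvanishing of $(\si\la,\gm)$ for every tail $\gm$ of $\si(\al)$---is supplied directly by Lemma \ref{regular}, so the induction goes through on the strengthened hypothesis.
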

\begin{proof}
Let $i<j$ be the pair of integers such that $\si(\al) =\ve_i-\ve_j$. The statement is trivial
if $j-i=1$, since $\check{f}_{\si(\al)}=f_{\si(\al)}$ then. If $j-i>1$, then, by Lemma \ref{regular}, $q^{2\left(\si(\la),\ve_l\right)}\not =q^{2\left(\si(\la),\ve_j\right)}$ for all $l$ such that $i<l<j$.
Hence the modified commutators in $(q-q^{-1})^{j-i-1}\check{f}_{\si(\al)}(\si\cdot \la)$ are deformations of ordinary commutators, up to a non-zero multipliers.
On the other hand, the standard $f_{\si(\al)}$ is itself a composition of deformed commutators
of Chevalley genrators.
\end{proof}

\begin{propn}
Suppose that $\la\in \frac{1}{\hbar}\c^*_{\l,reg}$. Then the $U_q(\g)$-module $M_{\si\cdot \la}$ is $\C[\![\hbar]\!]$-free.
\end{propn}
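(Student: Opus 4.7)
The plan is to identify an explicit $\C[\![\hbar]\!]$-basis of $M_{\si\cdot\la}$ via a PBW-type decomposition, with Lemma~\ref{dynamical_roots_deformed} providing the key link to the classical picture. Throughout, the idea is that the relations defining $M_{\si\cdot\la}$ as a quotient of $\hat M_{\si\cdot\la}$ are, after rescaling, deformations of classical root-vector relations for a subalgebra $\k_-\subset\n_-$, and the complementary PBW monomials descend to a free basis of the quotient.

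First I would replace each defining generator $v^\si_{\la,\al}=\check f_{\si(\al)}(\si\cdot\la)\,v_{\si\cdot\la}$ by a rescaled version $\tilde v^\si_{\la,\al}$, multiplying $\check f_{\si(\al)}(\si\cdot\la)$ by the appropriate power of $(q-q^{-1})$ as in the proof of Lemma~\ref{dynamical_roots_deformed}. That lemma, together with the regularity assumption $\la\in\frac{1}{\hbar}\c^*_{\l,reg}$ through Lemma~\ref{regular}, ensures that the resulting operator $\tilde f_{\si(\al)}$ lies in $U_\hbar(\n_-)$ and is a $\C[\![\hbar]\!]$-linear deformation of a nonzero scalar multiple of the standard PBW root vector $f_{\si(\al)}$. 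The rescaling changes the submodule generated only by $\hbar$-torsion, so it suffices to prove freeness of the quotient by the submodule $\tilde N_\si$ generated by the $\tilde v^\si_{\la,\al}$.

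Second, I would exploit admissibility of $\si$: since $\si(R^+_\l)$ is a closed subsystem of $R^+_\g$, the subspace $\k_-:=\bigoplus_{\gm\in\si(R^+_\l)}\g_{-\gm}$ is a genuine Lie subalgebra of $\n_-$. Choosing a PBW ordering of $R^+_\g$ that lists the roots of $\si(R^+_\l)$ first, the PBW theorem for $U_\hbar(\n_-)$ yields a $\C[\![\hbar]\!]$-linear decomposition $U_\hbar(\n_-)\simeq U_\hbar(\k_-)\otimes_{\C[\![\hbar]\!]} W$, where $W$ is the free $\C[\![\hbar]\!]$-span of ordered PBW monomials in the complementary root vectors. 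Applying this to $v_{\si\cdot\la}$ gives $\hat M_{\si\cdot\la}\simeq U_\hbar(\k_-)\cdot(W\cdot v_{\si\cdot\la})$ as $\C[\![\hbar]\!]$-modules.

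The decisive and most delicate step is to show that $\tilde N_\si$ coincides with the $\C[\![\hbar]\!]$-submodule $U_\hbar(\k_-)_+\cdot W\cdot v_{\si\cdot\la}$ cut out by the augmentation ideal of $U_\hbar(\k_-)$, so that the quotient $M_{\si\cdot\la}\simeq W\cdot v_{\si\cdot\la}$ is manifestly $\C[\![\hbar]\!]$-free. Modulo $\hbar$, each $\tilde f_{\si(\al)}v_{\si\cdot\la}$ becomes $c_\al\, f_{\si(\al)}v_{\si\cdot\la}$ with $c_\al\neq 0$, and iterated brackets with the remaining root vectors of $\k_-$ generate exactly $U(\k_-)_+\cdot W\cdot v_{\si\cdot\la}$; regularity of $\la$ forbids any further collapse. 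The main obstacle is to control the nonprincipal terms appearing in the recursion~(\ref{dyn_root_vectors}) that distinguish $\tilde f_{\si(\al)}$ from $f_{\si(\al)}$ quantum-mechanically, and to check, weight by weight, that they do not move $\tilde N_\si$ outside the claimed span. Once this identification is secured, the $\C[\![\hbar]\!]$-rank of $M_{\si\cdot\la}$ in each weight equals that of $W\cdot v_{\si\cdot\la}$, which is free by PBW; in the Levi subcase $\si(\Pi^+_\l)\subset\Pi^+_\g$ this recovers the parabolic Verma module of Proposition~\ref{levi_filtration}, and for general admissible $\si$ the dynamical vectors $\check f_{\si(\al)}$ supply precisely the quantum deformation of the parabolic-type relations needed to keep the rank unchanged.
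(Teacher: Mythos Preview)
Your overall strategy matches the paper's: invoke Lemma~\ref{dynamical_roots_deformed} to see that the rescaled generators $\tilde f_{\si(\al)}$ are deformations of classical root vectors, and then run a PBW argument to exhibit a free $\C[\![\hbar]\!]$-basis of the quotient. The paper itself only sketches this, deferring the PBW details to \cite{M3}, Proposition~6.2 and Section~6, and recording as the crucial point that the vectors $\check f_{\si(\al)}v_{\si\cdot\la}$ can be incorporated into a PBW basis of $\hat M_{\si\cdot\la}$.

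However, your second step contains a genuine gap. You claim one can choose a PBW ordering of $R^+_\g$ listing the roots of $\si(R^+_\l)$ first, yielding $U_\hbar(\n_-)\simeq U_\hbar(\k_-)\otimes_{\C[\![\hbar]\!]} W$. But the quantum PBW theorem for $U_\hbar(\n_-)$ requires a \emph{convex} ordering, one arising from a reduced expression of $w_0$; equivalently, any initial segment must be the inversion set of some Weyl group element. For an admissible $\si$ that is not of Levi type, the set $\si(R^+_\l)$ is closed but its complement in $R^+_\g$ generally is not. Already for $\g\l(4)$ with $\l=\g\l(2)\oplus\g\l(1)\oplus\g\l(1)$ and $\si$ the transposition $(2\ 3)$, one gets $\si(R^+_\l)=\{\ve_1-\ve_3\}$, which cannot be an inversion set: if $\ve_1-\ve_2$ and $\ve_2-\ve_3$ are both non-inverted then so is their sum. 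Hence no convex ordering puts $\si(R^+_\l)$ first, there is no evident quantum subalgebra ``$U_\hbar(\k_-)$'' in $U_\hbar(\n_-)$, and the tensor factorization you rely on does not follow. The route taken in \cite{M3} sidesteps this: rather than isolating a subalgebra factor, one works with a single convex PBW basis of $\hat M_{\si\cdot\la}$, substitutes the individual vectors $\check f_{\si(\al)}v_{\si\cdot\la}$ for $\al\in\Pi^+_\l$ in place of the corresponding standard basis elements (legitimate by Lemma~\ref{dynamical_roots_deformed}), and establishes freeness of the quotient weight-space by weight-space via a deformation argument. Your third paragraph is heading in that direction, but as written it still leans on the factorization from the second step, which is exactly what fails outside the Levi case.
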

\begin{proof}
The proof is similar to \cite{M3}, Proposition 6.2, where it is done for a certain quotient of the parabolic Verma module over
$U_q\bigl(\s\p(n)\bigr)$. It is based on a construction of a PBW basis in $\hat M_\la$, see therein, Section 6.
Here we indicate only the crucial point: for all $\al\in \Pi^+_\l$, $\si\in S_n/S_{\nb}$, and $\la\in \frac{1}{\hbar}\c^*_{\l,reg}$ the vectors  $\check{f}_{\si(\al)}v_{\si\cdot \la}$ can
be included in a PBW basis in $M_\la$ when the ring of scalars is $\C[\![\hbar]\!]$. This follows from Lemma \ref{dynamical_roots_deformed}.
\end{proof}

\begin{thm}
For all $\si\in S_n/S_{\nb}$ and $\la\in \frac{1}{\hbar}\c^*_{\l,reg}$ such that
$x_i=q^{2\la_i-2m_i+2}$, $i=1,\ldots, k$, the homomorphism of $\C_{\hbar}[\End(\C^n)]\to \End[M_{\si\cdot \la}]$ factors through an exact representation of $\C_\hbar[O]$.
\end{thm}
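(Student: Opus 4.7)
The plan is to verify that the defining relations of $\C_\hbar[O]$ are annihilated by the assignment $K_{ij}\mapsto\Q_{ij}$ acting on $M_{\si\cdot\la}$, and then to deduce injectivity of the resulting map $\C_\hbar[O]\to \End(M_{\si\cdot\la})$ from a $U_\hbar(\g)$-invariance argument modulo $\hbar$.

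For the characteristic relation $\prod_{i=1}^k(K-x_i)=0$ I would work on $\C^n\tp M_{\si\cdot\la}$ using the filtration $(V^{\si\cdot\la}_i)_{i=1}^k$ supplied by Proposition \ref{levi_filtration}. On each graded component $V^{\si\cdot\la}_i/V^{\si\cdot\la}_{i-1}$ the matrix $\Q$ acts as the scalar $x_i^\si=\hat x_{m^\si_i}$, and for $\la\in\frac{1}{\hbar}\c^*_{\l,reg}$ the preceding direct-sum proposition guarantees that all $k$ strata are nonzero. Since $\{x_i^\si\}_{i=1}^k=\{x_i\}_{i=1}^k$ as multisets, telescoping down the filtration yields $\prod_{i=1}^k(\Q-x_i)=0$ in $\End(\C^n)\tp \End(M_{\si\cdot\la})$, which is the image of the desired relation. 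For the $q$-trace identities my approach is via the Harish--Chandra central character: since $\Q=(\pi\tp\id)(\Ru_{21}\Ru)$, each $\Tr_q(\Q^m)$ is a central element of $U_\hbar(\g)$, so it acts on any highest weight quotient of $\hat M_\mu$ by a scalar depending only on the $W$-orbit of $\mu+\rho$. Because $\si\cdot\la+\rho=\si(\la+\rho)$, the central character of $M_{\si\cdot\la}$ is $\si$-independent and coincides with that of the parabolic Verma module $M_\la$, on which the scalar was computed in \cite{DM} to equal $\sum_i x_i^m[n_i]_q\prod_{j\ne i}\frac{q^{n_j}x_i-x_jq^{-n_j}}{x_i-x_j}$; this is exactly the prescribed value.

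For exactness, let $I\subset \C_\hbar[O]$ denote the kernel of the representation. The $U_\hbar(\g)$-equivariance of $K\mapsto\Q$, combined with the adjoint action of $U_\hbar(\g)$ on $\End(M_{\si\cdot\la})$, makes $I$ a $U_\hbar(\g)$-invariant two-sided ideal. Its reduction $(I+\hbar\C_\hbar[O])/\hbar\C_\hbar[O]$ is a $U(\g)$-invariant ideal of $\C[O]$; since $O$ is a single closed $G$-orbit the only such ideals are $0$ and $\C[O]$, and the latter is ruled out by $1\mapsto\id$. Hence $I\subset \hbar\C_\hbar[O]$, and because $\End(M_{\si\cdot\la})$ is $\C[\![\hbar]\!]$-torsion free (as $M_{\si\cdot\la}$ is $\C[\![\hbar]\!]$-free by the preceding proposition), every $a\in I$ has the form $\hbar b$ with $b\in I$; thus $I=\hbar I$, and the $\hbar$-adic separatedness of $\C_\hbar[O]$ forces $I=0$.

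The main obstacle I anticipate is the trace step: once one grants that $\Tr_q(\Q^m)$ is central and that the scalar is known in the parabolic case \cite{DM}, everything else reduces to filtration bookkeeping and a Nakayama-style argument. An alternative, should the central character route be unpalatable, would be to mimic Step~1 and compute $\Tr_q(\Q^m)$ stratum by stratum along $(V^{\si\cdot\la}_i)$, extracting the factors $[n_i]_q\prod_{j\ne i}\frac{q^{n_j}x_i-x_jq^{-n_j}}{x_i-x_j}$ as the quantum dimensions of the $\l$-isotypic components of $\C^n$ weighted by $q^{n+1-2s}$ along the corresponding rows of the diagrams $D^s$ of Section~5.
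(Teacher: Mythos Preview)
Your proposal is correct and follows essentially the same approach as the paper: the relations are verified via the filtration/direct-sum decomposition of $\C^n\tp M_{\si\cdot\la}$ together with the $\si$-independence of the central character, and exactness is deduced from the vanishing of the $U(\g)$-invariant kernel modulo $\hbar$ combined with a Nakayama-type argument using the $\C[\![\hbar]\!]$-freeness of $\C_\hbar[O]$ and $M_{\si\cdot\la}$. The only cosmetic difference is that the paper invokes the isotypic decomposition of $\C_\hbar[O]$ into finite-rank free pieces to run Nakayama componentwise, whereas you use torsion-freeness of $\End(M_{\si\cdot\la})$ to get $I=\hbar I$ and then global $\hbar$-adic separatedness; both routes are equivalent.
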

\begin{proof}
The minimal polynomial of $\Q$ and  $\Tr_q(\Q^m)$ are independent of $\si$, hence
the homomorphism of $\C_{\hbar}[G]\to \End[M_{\si\cdot \la}]$ factors through a homomorphism
$\C_\hbar[O]\to \End[M_{\si\cdot \la}]$. In the zero fiber,  the kernel of this homomorphism is zero.
Indeed, it is a proper invariant ideal in $\C[O]$ and it is zero since $O$ is a $G$-orbit. The algebra
$\C_\hbar[O]$ is $\C[\![\hbar]\!]$-free. It is a direct sum of isotypic $U_\hbar(\g)$-components, which
are finite over $\C[\![\hbar]\!]$, \cite{M1}. Therefore, the $U_\hbar(\g)$-invariant kernel is free. It is nil since
its zero fiber is nil; the representation of $\C_\hbar[O]$ in $\End[M_{\si\cdot \la}]$ is exact.
\end{proof}

A similar theory can be developed for quantization of the  Kirillov bracket on $\g^*$.
The quantum group $U_\hbar(\g)$ is replaced with $U(\g)\tp \C[\![t]\!]$, the algebra
$\C_\hbar[\End(\C^n)]$ with $U(\g_t)$ where
$
\g_t=\g[\![t]\!]
$
is a  $\C[\![t]\!]$-Lie algebra
with the commutator
$
[E_{ij},E_{lm}]_t=t\dt_{jl}E_{im}-t\dt_{im}E_{lj}
,$
$E_{ij}\in \g\subset \g[\![t]\!]$, $i,j=1,\ldots, n$.
The assignment $E_{ij}\mapsto t E_{ij}$ makes  $U(\g_t)$ a subalgebra in $U(\g)\tp\C[\![t]\!]$.
The dynamical root vectors are obtained from (\ref{dyn_root_vectors}) via the limit $q\to 1$.
The $U(\g)$-module $M_{\si\cdot \la}$ is defined similarly.
For $\la\in \frac{1}{t}\c^*_{l,reg}$, it is generated over $ \C(\!(t)\!)$ by the vector space $U(\n_-)v_{\si\cdot\la}$.
Its regular part $M_{\si\cdot \la}^{+}=U(\n_-)v_{\si\cdot\la}\tp \C[\![t]\!]$ is $U(\g_t)$-invariant.
The algebra $U(\g)$  acts on $\End(M_{\si\cdot \la}^{+})$
and on the image of $U(\g_t)$ in $\End(M_{\si\cdot \la}^{+})$.
The quantum orbit $\C_t[O]$ is described in terms of $E=\sum_{i,j=1}^ne_{ij}\tp E_{ji}\in \End(\C^n)\tp U(\g_t)$ as the quotient of $U_t[\g]$ by the ideal of
the relations (now $x_i$ may be not invertible but still pairwise distinct)
$$
\prod_{i=1}^k(E-x_i)=0, \quad
\Tr(E^m)=\sum_{i=1}^kx^m_in_i\prod_{j=1\atop j\not =i}^k\Bigl(1+\frac{t n_j}{x_i-x_j}\Bigr),\quad m=1,\ldots,k.
$$
These formulas can be obtained from a two parameter quantization at the limit $\hbar \to 0$. The two parameter
quantization can be formally obtained by a shift of the matrix $K$ and its eigenvalues, see \cite{DM} for details.
\begin{thm}
For all $\si\in S_n/S_{\nb}$ and $\la\in \frac{1}{t}\c^*_{\l,reg}$ such that
$x_i=2t(\la_i-m_i+1)$, $i=1,\ldots,k$, the homomorphism of $U(\g_t)\to \End(M_{\si\cdot \la}^+)$ factors through an exact representation of
the algebra $\C_t[O]$.
\end{thm}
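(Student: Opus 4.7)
The plan is to mirror the proof of the preceding (quantum) Theorem, systematically replacing each quantum ingredient by its classical counterpart. Specifically, $U_\hbar(\g)$ is replaced by $U(\g)\tp\C[\![t]\!]$, the reflection equation algebra $\C_\hbar[\End(\C^n)]$ by $U(\g_t)$, the matrix $\Q$ by $E=\sum_{i,j}e_{ij}\tp E_{ji}$, and the dynamical root vectors by the rescaled $q\to 1$ limit of (\ref{dyn_root_vectors}). The classical Verma module $\hat M_{\si\cdot\la}$ over $U(\g)\tp \C(\!(t)\!)$ replaces its quantum counterpart, and $M_{\si\cdot\la}^+$ plays the role of the $\C[\![t]\!]$-integral lattice on which $U(\g_t)$ acts.

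First I would transport to this classical setting all structural results of the preceding sections: Propositions \ref{singular}, \ref{principal_filtration}, \ref{levi_filtration}, \ref{si-singular}, Corollary \ref{hat u_l}, and Lemmas \ref{M_j in M_i}, \ref{u_reduction}, \ref{dynamical_roots_deformed}. Each carries over verbatim with $[z]_q$ replaced by $z$; the proofs actually simplify since $q$-brackets become ordinary scalars. The regularity condition $\la\in \frac{1}{t}\c^*_{\l,reg}$ still guarantees that rescaled dynamical root vectors deform their classical analogues and generate a PBW-type basis in $M_{\si\cdot\la}^+$. The output is a filtration of $\C^n\tp M_{\si\cdot\la}^+$ by $k$ $U(\g_t)$-submodules $V_i^{\si\cdot\la}$ generated by singular vectors whose $E$-eigenvalues are the pairwise distinct, $\si$-independent scalars $x_i=2t(\la_{m_i}-m_i+1)$.

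Second, the defining relations of $\C_t[O]$ must be verified on $M_{\si\cdot\la}^+$. The minimal polynomial identity $\prod_{i=1}^k(E-x_i)=0$ is immediate from the filtration, since $E$ acts on each graded quotient as multiplication by $x_i$. For the trace identity $\Tr(E^m)=\sum_i x_i^m n_i\prod_{j\ne i}(1+tn_j/(x_i-x_j))$ I would compute $\Tr(E^m)$ by summing contributions over the graded components $V_i^{\si\cdot\la}/V_{i-1}^{\si\cdot\la}$, each a classical parabolic-type Verma module with multiplicity data indexed by the partition $\nb$; the factors $(1+tn_j/(x_i-x_j))$ come from the classical limit of the $q$-dimension weights appearing in the quantum case. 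Alternatively, and more cleanly, these identities can be read off as the $\hbar\to 0$ limit of the quantum trace identities via the two-parameter quantization noted at the end of the paper. Either route shows that the homomorphism $U(\g_t)\to\End(M_{\si\cdot\la}^+)$ descends to $\C_t[O]$.

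Finally, exactness is argued exactly as in the preceding quantum Theorem. At $t=0$ the representation reduces to a $U(\g)$-equivariant map $\C[O]\to \End(M_{\si\cdot\la}^+/tM_{\si\cdot\la}^+)$ whose kernel is a $G$-invariant ideal in the affine ring of a single $G$-orbit, hence zero. Since $\C_t[O]$ decomposes as a direct sum of isotypic $U(\g)$-components each free of finite rank over $\C[\![t]\!]$, its kernel is $\C[\![t]\!]$-free, and vanishing of the zero fiber forces it to vanish. The principal obstacle is the precise identification of the combinatorial factors $(1+tn_j/(x_i-x_j))$ in the trace identity; bypassing a direct computation by invoking the $\hbar\to 0$ limit of the two-parameter quantization is the most efficient way to handle this step.
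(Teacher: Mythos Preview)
Your proposal is correct and matches the paper's approach exactly: the paper gives no separate proof of this theorem, instead stating beforehand that ``a similar theory can be developed'' by replacing $U_\hbar(\g)$ with $U(\g)\tp\C[\![t]\!]$, $\C_\hbar[\End(\C^n)]$ with $U(\g_t)$, and taking the $q\to 1$ limit of the dynamical root vectors, with the trace formulas obtained from the two-parameter quantization at $\hbar\to 0$. Your outline reproduces precisely this strategy, including the exactness argument via the zero fiber and $\C[\![t]\!]$-freeness of isotypic components.
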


\end{document}